\title[COMPLEX PLATEAU PROBLEM]{KOHN--ROSSI COHOMOLOGY AND ITS APPLICATION TO THE COMPLEX PLATEAU PROBLEM, III}
\author{}
\author[Rong Du]{Rong Du$^{\dag}$}
\author[Stephen Yau]{Stephen Yau$^{\ast}$}
\address{Department of Mathematics\\
East China Normal University\\
No. 500, Dongchuan Road\\
Shanghai, 200241, P. R. China\\}
\email{rdu@math.ecnu.edu.cn}
\address{Department of mathematical sciences\\
Tsinghua University\\
Beijing, 100084, P.R.China\\}
\email{yau@uic.edu}
\thanks{$^{\dag}$ Research supported by National Natural Science Foundation of China and Innovation Foundation of East China Normal University.}
\thanks{$^{\ast}$ Research partially supported by NSF and Department of Mathematical Sciences, Tsinghua University, Beijing, P.R.China.}
\theoremstyle{definition}
\newtheorem{theorem}[subsection]{Theorem}
\newtheorem{lemma}[subsection]{Lemma}
\newtheorem{definition}[subsection]{Definition}
\newtheorem{proposition}[subsection]{Proposition}
\newtheorem{corollary}[subsection]{Corollary}
\newfont{\drnew}{wncyr10}
\let\tilde=\widetilde
\def\dashfill{\leaders\hbox{\hbox to 3.25pt{\hrulefill}\hspace*{2pt}\hbox to 3.25pt{\hrulefill}}\hfill}
\newcommand{\CITE}[1]{{[#1]}}
\let\cite=\CITE
\begin{document}

\begin{abstract}
Let $X$ be a compact connected strongly pseudoconvex $CR$ manifold
of real dimension $2n-1$ in $\mathbb{C}^{N}$. It has been an
interesting question to find an intrinsic smoothness criteria for
the complex Plateau problem. For $n\ge 3$ and $N=n+1$, Yau found a
necessary and sufficient condition for the interior regularity of
the Harvey--Lawson solution to the complex Plateau problem by means
of Kohn--Rossi cohomology groups on $X$ in 1981. For $n=2$ and $N\ge
n+1$, the problem has been open for over 30 years. In this paper we
introduce a new CR invariant $g^{(1,1)}(X)$ of $X$. The vanishing of
this invariant will give the interior regularity of the
Harvey--Lawson solution up to normalization. In the case $n=2$ and
$N=3$, the vanishing of this invariant is enough to give the
interior regularity.
\end{abstract}

\maketitle

{\small{Dedicated to Professor Blaine Lawson on the occasion of his
$68^{\text{th}}$ Birthday.}}

\vspace{1cm}
\section{\textbf{Introduction}}
One of the natural fundamental questions of complex geometry is to
study the boundaries of complex varieties. For example, the famous
classical complex Plateau problem asks which odd-dimensional real
sub-manifolds of $\mathbb{C}^N$ are boundaries of complex
sub-manifolds in $\mathbb{C}^N$. In their beautiful seminal paper,
Harvey and Lawson [Ha-La] proved that for any compact connected $CR$
manifold $X$ of real dimension $2n-1$, $n\ge 2$, in $\mathbb{C}^N$,
there is a unique complex variety $V$ in $\mathbb{C}^N$ such that
the boundary of $V$ is $X$. In fact, Harvey and Lawson proved the
following theorem.

\vspace{.5cm} \textbf{Theorem} (Harvey--Lawson [Ha-La1, Ha-La2]) Let
$X$ be an embeddable strongly pseudoconvex $CR$ manifold. Then $X$
can be $CR$ embedded in some $\mathbb{C}^{\tilde{N}}$ and $X$ bounds
a Stein variety $V \subseteq \mathbb{C}^{\tilde{N}}$ with at most
isolated singularities.

The above theorem is one of the deepest theorems in complex
geometry. It relates the theory of strongly pseudoconvex $CR$ manifolds
on the one hand and the theory of isolated normal singularities on
the other hand.

The next fundamental question is to determine when $X$ is a boundary
of a complex sub-manifold in $\mathbb{C}^N$, i.e., when $V$ is
smooth.  In 1981, Yau [Ya] solved this problem for the case $n\ge 3$
by calculation of Kohn--Rossi cohomology groups $H^{p, q}_{K R}(X)$.
More precisely, suppose $X$ is a compact connected strongly
pseudoconvex $CR$ manifold of real dimension $2n-1$, $n\ge 3$, in
the boundary of a bounded strongly pseudoconvex domain $D$ in
$\mathbb{C}^{n+1}$. Then $X$ is a boundary of the complex
sub-manifold $V\subset D-X$ if and only if Kohn--Rossi cohomology
groups $H^{p, q}_{K R}(X)$ are zeros for $1\le q\le n-2$ (see
Theorem \ref{Yau Pla}).

Kohn--Rossi cohomology introduced by Kohn and Rossi [Ko-Ro] in 1965
is a fundamental invariant of $CR$ manifold. In the recent work of
Huang, Luk,  and Yau [H-L-Y], it was shown that the Kohn--Rossi
cohomology plays an important role in the simultaneous $CR$
embedding of a family of strongly pseudoconvex $CR$ manifolds of
dimension at least 5.

For $n=2$, i.e., $X$ is a 3-dimensional $CR$ manifold, the intrinsic
smoothness criteria for the complex Plateau problem remains unsolved
for over a quarter of a century even for the hypersurface case. The
main difficulty is that the Kohn--Rossi cohomology groups are
infinite-dimensional in this case. Let $V$ be a complex variety with
$X$ as its boundary. Then the singularities of $V$ are surface
singularities. In [Lu-Ya2], the holomorphic De Rham cohomology,
which is derived form Kohn--Rossi cohomology, is considered to
determine what kind of singularities can happen in $V$ . In fact, in
[Ta], Tanaka introduced a spectral sequence $E^{p, q}_r(X)$ with
$E^{p, q}_1(X)$ being the Kohn--Rossi cohomology group and $E^{k,
0}_2(X)$ being the holomorphic De Rham cohomology denoted by
$H^k_h(X)$. So consideration of De Rham cohomology is natural in the
case of $n=2$. Motivated by the deep work of Siu [Si], Luk and Yau
introduced the Siu complex and s-invariant (see Definition \ref{s
inv}, below) for isolated singularity $(V, 0)$ and proved a theorem
in [Lu-Ya2] that if $(V, 0)$ is a Gorenstein surface singularity
with vanishing $s$-invariant, then $(V, 0)$ is a quasihomogeneous
singularity whose link is rational homology sphere. In [Lu-Ya2],
they proved that if $X$ is a strongly pseudoconvex compact
Calabi--Yau $CR$ manifold of dimension 3 contained in the boundary
of a strongly pseudoconvex bounded domain $D$ in $\mathbb{C}^N$ and
the holomorphic De Rham cohomology $H^2_h(X)$ vanishes, then $X$ is
a boundary of a complex variety $V$ in $D$ with boundary regularity
and $V$ has only isolated singularities in the interior and the
normalizations of these singularities are Gorenstein surface
singularities with vanishing s-invariant (see Theorem \ref{th
h2=0}). As a corollary of this theorem, they get that if $N=3$, the
variety $V$ bounded by $X$ has only isolated quasi-homogeneous
singularities such that the dual graphs of the exceptional sets in
the resolution are star shaped and all the curves are rational (see
Corollary \ref{h2=0}). Even though one cannot judge when $X$ is a
boundary of a complex manifold with the vanishing of $H^2_h(X)$, it
is a fundamental step toward the solution of the regularity of the
complex Plateau problem. In this paper, we introduce a new $CR$
invariant $g^{(1,1)}(X)$ which has independent interest besides its
application to the complex Plateau problem. Roughly speaking, our
new invariant $g^{(1,1)}(X)$ is the number of independent
holomorphic 2-forms on $X$ which cannot be written as a linear
combination of those elements of the form holomorphic 1-form wedge
with holomorphic 1-form on $X$ . This new invariant will allow us to
solve the intrinsic smoothness criteria up to normalization for the
classical complex Plateau problem for $n=2$.

\vspace{.5cm}\textbf{Theorem A}  \emph{Let $X$ be a strongly
pseudoconvex compact Calabi--Yau $CR$ manifold of dimension $3$.
Suppose that $X$ is contained in the boundary of a strongly
pseudoconvex bounded domain $D$ in $\mathbb{C}^N$ with $H^2_h(X)=0$.
Then $X$ is a boundary of the complex sub-manifold up to
normalization $V\subset D-X$ with boundary regularity if and only if
$g^{(1,1)}(X)=0$.}

\vspace{.5cm}Thus, the interior regularity of the complex Plateau
problem is solved up to normalization. As a corollary of Theorem A,
we have solved the interior regularity of the complex Plateau
problem in case $X$ is of real codimension 3 in $\mathbb{C}^3$.

\vspace{.5cm}\textbf{Theorem B} \emph{Let $X$ be a strongly
pseudoconvex compact $CR$ manifold of dimension $3$. Suppose that
$X$ is contained in the boundary of a strongly pseudo-convex bounded
domain $D$ in $\mathbb{C}^3$ with $H^2_h(X)=0$. Then $X$ is a
boundary of the complex sub-manifold $V\subset D-X$ if and only if
$g^{(1,1)}(X)=0$.}

\vspace{.5cm} In Section 2, we shall recall the definition of
holomorphic De Rham cohomology for a $CR$ manifold. In Section 3,
after recalling several local invariants of isolated singularity, we
introduce some new invariants of singularities and new $CR$
invariants for $CR$ manifolds. In Section 4, we prove the main
theorem of this paper.

Finally, we would like to thank Professor Lawrence Ein and Professor
Anatoly Libgober for helpful discussions.

\section{\textbf{Preliminaries}}

Kohn--Rossi cohomology was first introduced by Kohn--Rossi.
Following Tanaka [Ta], we reformulate the definition in a way
independent of the interior manifold.

\begin{definition}
\emph{Let $X$ be a connected orientable manifold of real dimension
$2n-1$. A $CR$ structure on $X$ is an $(n-1)$-dimensional sub-bundle
$S$ of $\mathbb{C}T(X)$ (complexified tangent bundle) such that:
\begin{enumerate}
\item[1 .]
$S\bigcap \bar{S}=\{0\}$.
\item[2 .]
If $L$, $L'$ are local sections of $S$, then so is $[L, L']$.
\end{enumerate}}
\end{definition}

Such a manifold with a $CR$ structure is called a $CR$ manifold.
There is a unique sub-bundle $\mathcal{H}$ of $T(X)$ such that
$\mathbb{C}\mathcal{H}=S\bigoplus \bar{S}$. Furthermore, there is a
unique homomorphism $J$ : $\mathcal{H}\longrightarrow\mathcal{H}$
such that $J^2=-1$ and $S=\{v-iJv : v\in\mathcal{H}\}$. The pair
$(\mathcal{H}, J)$ is called the real expression of the $CR$
structure.

Let $X$ be a $CR$ manifold with structure $S$. For a complex valued
$C^\infty$ function $u$ defined on $X$, the section
$\bar{\partial}_bu\in \Gamma(\bar{S}^*)$ is defined by
$$\bar{\partial}_bu(\bar{L})=\bar{L}(u), L\in S.$$
The differential operator $\bar{\partial}_b$ is called the
(tangential) Cauchy--Riemann operator, and a solution $u$ of the
equation $\bar{\partial}_bu=0$ is called a holomorphic function.

\begin{definition}\label{hvb on X}
\emph{A complex vector bundle E over X is said to be holomorphic if
there is a differential operator
\[
\bar{\partial}_E: \Gamma(E)\longrightarrow \Gamma(E\otimes
\bar{S}^{*})
\]
satisfying the following conditions:
\begin{enumerate}
\item[1 .]
$\bar{\partial}_E(fu)(\bar{L}_1)=(\bar{\partial}_bf)(\bar{L}_1)u+f(\bar{\partial}_Eu)(\bar{L}_1)
=(\bar{L}_1f)u+f(\bar{\partial}_Eu)(\bar{L}_1)$.
\item[2. ]
$(\bar{\partial}_Eu)[\bar{L}_1,
\bar{L}_2]=\bar{\partial}_E(\bar{\partial}_Eu(\bar{L}_2))(\bar{L}_1)-\bar{\partial}_E(\bar{\partial}_Eu(\bar{L}_1))(\bar{L}_2)$,
where $u\in \Gamma(E)$, $f\in C^\infty(X)$, and $L_1$, $L_2 \in
\Gamma(S)$.
\end{enumerate}}
\end{definition}

The operator $\bar{\partial}_E$ is called the Cauchy--Riemann
operator and a solution $u$ of the equation $\bar{\partial}_Eu=0$ is
called a holomorphic cross section.

A basic holomorphic vector bundle over a CR manifold $X$ is the
vector bundle $\widehat{T}(X)=\mathbb{C}T(X)/\bar{S}$. The
corresponding operator
$\bar{\partial}=\bar{\partial}_{\widehat{T}(X)}$ is defined as
follows. Let $p$ be the projection from $\mathbb{C}T(X)$ to
$\widehat{T}(X)$. Take any $u \in \Gamma(\widehat{T}(X))$ and
express it as $u=p(Z)$, $Z \in \Gamma(\mathbb{C}T(X))$. For any
$L\in \Gamma(S)$, define a cross section
$(\bar{\partial}u)(\bar{L})$ of $\widehat{T}(X)$ by
$(\bar{\partial}u)(\bar{L})= p([\bar{L},Z])$. One can show that
$(\bar{\partial}u)(\bar{L})$ does not depend on the choice of $Z$
and that $\bar{\partial}u$ gives a cross section of
$\widehat{T}(X)\otimes \bar{S}^{*}$. Furthermore, one can show that
the operator $u\longmapsto \bar{\partial}u$ satisfies $(1)$ and
$(2)$ of Definition \ref{hvb on X}, using the Jacobi identity in the
Lie algebra $\Gamma(\mathbb{C}T(X))$. The resulting holomorphic
vector bundle $\widehat{T}(X)$ is called the holomorphic tangent
bundle of $X$.

If $X$ is a real hypersurface in a complex manifold $M$, we may
identify $\widehat{T}(X)$ with the holomorphic vector bundle of all
$(1,0)$ tangent vectors to $M$ and $\widehat{T}(X)$ with the
restriction of $\widehat{T}(M)$ to $X$. In fact, since the structure
$S$ of $X$ is the bundle of all $(1,0)$ tangent vectors to $X$, the
inclusion map $\mathbb{C}T(X)\longrightarrow\mathbb{C}T(M)$ induces
a natural map
$\widehat{T}(X)\xrightarrow[\phantom{ttttt}]{\phi}\widehat{T}(M)|_X$
which is a bundle isomorphism satisfying
$\bar{\partial}(\phi(u))(\bar{L})=\phi(\bar{\partial}u(\bar{L}))$,
$u\in\Gamma(\widehat{T}(X))$, $L\in S$.

For a holomorphic vector bundle $E$ over $X$, set
\[
C^q(X, E)=E\otimes \wedge^q\bar{S}^{*}, \mathscr{C}^q(X,
E)=\Gamma(C^q(X, E))
\]
and define a differential operator
\[
\bar{\partial}^q_E : \mathscr{C}^q(X, E)\longrightarrow
\mathscr{C}^{q+1}(X, E)
\]
by
\[
(\bar{\partial}^q_E\phi)(\bar{L}_1, \dots , \bar{L}_{q+1})
=\sum_i(-1)^{i+1}\bar{\partial}_E(\phi(\bar{L}_1, \dots ,
\widehat{\bar{L}_i}, \dots , \bar{L}_{q+1}))(\bar{L}_i)
\]
\[
+\sum_{i<j}(-1)^{i+j}\phi([\bar{L}_i, \bar{L}_j], \bar{L}_1, \dots ,
\widehat{\bar{L}_i}, \dots , \bar{L}_{q+1})
\]
for all $\phi\in\mathscr{C}^q(X, E)$ and $L_1, \dots ,
L_{q+1}\in\Gamma(S)$. One shows by standard arguments that
$\bar{\partial}^q_E\phi$ gives an element of $\mathscr{C}^{q+1}(X,
E)$ and that $\bar{\partial}^{q+1}_E\bar{\partial}^q_E=0$. The
cohomology groups of the resulting complex $\{\mathscr{C}^{q}(X, E),
\bar{\partial}^{q}_E \}$ is denoted by $H^q(X, E)$.

Let $\{\mathscr{A}^k(X), d\}$ be the De Rham complex of $X$ with
complex coefficients, and let $H^k(X)$ be the De Rham cohomology
groups. There is a natural filtration of the De Rham complex, as
follows. For any integer $p$ and $k$, put
$A^k(X)=\wedge^k(\mathbb{C}T(X)^{*})$ and denote by $F^p(A^k(X))$
the sub-bundle of $A^k(X)$ consisting of all $\phi\in A^k(X)$ which
satisfy the equality
\[
\phi(Y_1, \dots, Y_{p-1}, \bar{Z}_1, \dots, \bar{Z}_{k-p+1})=0
\]
for all $Y_1, \dots, Y_{p-1} \in \mathbb{C}T(X)_0$ and $Z_1, \dots,
Z_{k-p+1}\in S_0$, $0$ being the origin of $\phi$. Then
\[
A^k(X)=F^0(A^k(X))\supset F^1(A^k(X)) \supset \cdots
\]
\[
\supset F^k(A^k(X)) \supset F^{k+1}(A^k(X))=0.
\]
Setting $F^p(\mathscr{A}^k(X))=\Gamma(F^p(A^k(X)))$, we have
\[
\mathscr{A}^k(X)=F^0(\mathscr{A}^k(X))\supset
F^1(\mathscr{A}^k(X))\supset \cdots
\]
\[
\supset F^k(\mathscr{A}^k(X))\supset F^{k+1}(\mathscr{A}^k(X))=0.
\]
Since clearly $dF^p(\mathscr{A}^k(X))\subseteq
F^p(\mathscr{A}^{k+1}(X))$, the collection
$\{F^p(\mathscr{A}^k(X))\}$ gives a filtration of the De Rham
complex.

We denote by $H^{p, q}_{KR}(X)$ the groups $E^{p, q}_1(X)$ of the
spectral sequence $\{E^{p, q}_r(X)\}$ associated with the filtration
$\{F^p(\mathscr{A}^k(X))\}$. We call $H^{p, q}_{KR}(X)$ the
Kohn--Rossi cohomology group of type $(p, q)$. More explicitly, let
\[
A^{p, q}(X)=F^p(A^{p+q}(X)), \mathscr{A}^{p, q}(X)=\Gamma(A^{p,
q}(X)),
\]
\[
C^{p, q}(X)=A^{p, q}(X)/A^{p+1, q-1}(X),  \mathscr{C}^{p,
q}(X)=\Gamma(C^{p, q}(X)).
\]
Since $d : \mathscr{A}^{p, q}(X) \longrightarrow \mathscr{A}^{p,
q+1}(X)$ maps $\mathscr{A}^{p+1, q-1}(X)$ into $\mathscr{A}^{p+1,
q}(X)$, it induces an operator $d'': \mathscr{C}^{p,
q}(X)\longrightarrow \mathscr{C}^{p, q+1}(X)$. $H^{p, q}_{KR}(X)$
are then the cohomology groups of the complex $\{\mathscr{C}^{p,
q}(X), d''\}$.

Alternatively, $H^{p, q}_{KR}(X)$ may be described in terms of the
vector bundle $E^p=\wedge^p(\widehat{T}(X)^{*})$. If for $\phi\in
\Gamma(E^p)$, $u_1,\dots, u_p\in \Gamma(\widehat{T}(X))$, $Y\in S$,
we define $(\bar{\partial}_{E^p}\phi)(\bar{Y})=\bar{Y}\phi$ by
\[
\bar{Y}\phi(u_1,\dots, u_p)=\bar{Y}(\phi(u_1,\dots,
u_p))+\sum_i(-1)^i\phi(\bar{Y}u_i, u_1,\dots, \widehat{u_i},\dots,
u_p)
\]
where $\bar{Y}u_i=(\bar{\partial}_{\widehat{T}(X)}u_i)(\bar{Y})$,
then we easily verify that $E^p$ with $\bar{\partial}_{E^p}$ is a
holomorphic vector bundle. Tanaka [Ta] proves that $C^{p,q}(X)$ may
be identified with $C^q(X, E^p)$ in a natural manner such that
\[
d''\phi=(-1)^p\bar{\partial}_{E^p}\phi, \phi \in \mathscr{C}^{p,
q}(X).
\]
Thus, $H^{p, q}_{KR}(X)$ may be identified with $H^q(X, E^p)$.

We denote by $H^k_h(X)$ the groups $E^{k,0}_2(X)$ of the spectral
sequence $\{E^{p,q}_r(X)\}$ associated with the filtration
$\{F^p(\mathscr{A}^k(X))\}$. We call $H^k_h(X)$ the holomorphic De
Rham cohomology groups. The groups $H^k_h(X)$ are the cohomology
groups of the complex $\{\mathscr{S}^k(X), d\}$, where we put
$\mathscr{S}^k(X)=E^{k,0}_1(X)$ and $d=d_1 :
E^{k,0}_1\longrightarrow E^{k+1,0}_1$. Recall that
$\mathscr{S}^k(X)$ is the kernel of the following mapping:
\[
d_0: E^{k, 0}_0=F^k\mathscr{A}^k=\mathscr{A}^{k,0}(X)\hspace{4cm}
\]
\[
\hspace{3cm}\rightarrow
E^{k,1}_0=F^k\mathscr{A}^{k+1}/F^{k+1}\mathscr{A}^{k+1}=\mathscr{A}^{k,1}(X)/\mathscr{A}^{k+1,0}.
\]
Note that $\mathscr{S}$ may be characterized as the space of
holomorphic $k$-forms, namely holomorphic cross sections of $E^k$.
Thus, the complex $\{\mathscr{S}^k(X), d\}$ (respectively, the
groups $H^k_h(X)$) will be called the holomorphic De Rham complex
(respectively, the holomorphic De Rham cohomology groups).

\begin{definition}
\emph{Let $L_1,\dots, L_{n-1}$ be a local frame of the $CR$
structure $S$ on $X$ so that $\bar{L}_1,\dots,\bar{L}_{n-1}$ is a
local frame of $\bar{S}$. Since $S\oplus  \bar{S}$ has complex
codimension $1$ in $\mathbb{C}T(X)$, we may choose a local section N
of $\mathbb{C}T(X)$ such that $L_1,\dots, L_{n-1},
\bar{L}_1,\dots,\bar{L}_{n-1}$, $N$ span $\mathbb{C}T(X)$. We may
assume that $N$ is purely imaginary. Then the matrix $(c_{ij})$
defined by
\[
[L_i,
\bar{L}_j]=\sum_ka^k_{i,j}L_k+\sum_kb^k_{i,j}\bar{L}_k+c_{i,j}N
\]
is Hermitian, and it is called the Levi form of $X$.}
\end{definition}
\begin{proposition}
\emph{The number of nonzero eigenvalues and the absolute value of
the signature of $(c_{ij})$ at each point are independent of the
choice of $L_1,\dots, L_{n-1}, N$.}
\end{proposition}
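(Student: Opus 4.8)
The key idea is to understand exactly how the matrix $(c_{ij})$ transforms under a change of the local frame $L_1,\dots,L_{n-1},N$, and then observe that such a change acts on $(c_{ij})$ by Hermitian congruence together with scaling, so Sylvester's law of inertia applies. First I would fix two admissible frames: $L_1,\dots,L_{n-1},N$ with dual structure giving $(c_{ij})$, and a second frame $L_1',\dots,L_{n-1}',N'$ giving $(c_{ij}')$. Since $\bar{S}$ is intrinsically defined, the only freedom in choosing the $L_i$ spanning $S$ is $L_i' = \sum_k g_{ik} L_k$ for some invertible matrix $G=(g_{ik})$ of smooth functions (no $\bar{L}$ or $N$ components can appear, because $L_i'\in\Gamma(S)$). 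The freedom in $N$ is $N' = \lambda N + \sum_k p_k L_k + \sum_k q_k \bar{L}_k$ for a nowhere-vanishing function $\lambda$ and smooth functions $p_k,q_k$; the requirement that $N'$ be purely imaginary (equivalently, $N'=-\overline{N'}$, using $N=-\bar N$) forces $\lambda$ to be real and couples $q_k$ to $\bar p_k$, but the precise constraint will not matter for the conclusion.

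Next I would substitute into the defining bracket relation. Compute
\[
[L_i',\bar L_j'] = \Bigl[\sum_k g_{ik}L_k,\ \sum_\ell \bar g_{j\ell}\bar L_\ell\Bigr]
= \sum_{k,\ell} g_{ik}\bar g_{j\ell}[L_k,\bar L_\ell] + (\text{terms from }L_k(\bar g_{j\ell})\bar L_\ell \text{ and } \bar L_\ell(g_{ik})L_k).
\]
The extra terms produced by differentiating the coefficients lie entirely in $\Gamma(S)\oplus\Gamma(\bar S)$, hence contribute nothing to the $N$-component. Expanding $[L_k,\bar L_\ell] = \sum_m a^m_{k\ell}L_m + \sum_m b^m_{k\ell}\bar L_m + c_{k\ell}N$ and then re-expressing $N$ in terms of the new frame via $N = \lambda^{-1}(N' - \sum p_k L_k - \sum q_k \bar L_k)$, the coefficient of $N'$ on the right-hand side is exactly $\sum_{k,\ell} g_{ik}\,c_{k\ell}\,\bar g_{j\ell}\,\lambda^{-1}$. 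Therefore
\[
(c_{ij}') = \lambda^{-1}\, G\, (c_{k\ell})\, G^{*},
\]
where $G^* = \bar G^{\mathsf T}$ and $\lambda$ is a real nowhere-zero function. (This also re-proves the Hermitian symmetry: $G(c_{k\ell})G^*$ is Hermitian whenever $(c_{k\ell})$ is.)

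Finally I would invoke linear algebra pointwise. At a fixed point $x\in X$, $G(x)$ is an invertible complex matrix and $(c_{ij}'(x)) = \lambda(x)^{-1} G(x)(c_{k\ell}(x))G(x)^*$. Congruence by an invertible matrix preserves the rank of a Hermitian matrix, so the number of nonzero eigenvalues is unchanged; and by Sylvester's law of inertia it preserves the signature $(p_+,p_-)$ of the number of positive and negative eigenvalues. Multiplying by the real scalar $\lambda(x)^{-1}$ either preserves the signature (if $\lambda(x)>0$) or swaps $p_+\leftrightarrow p_-$ (if $\lambda(x)<0$); in either case the absolute value $|p_+ - p_-|$ is unchanged, as is the rank. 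This proves both assertions. I do not anticipate a serious obstacle; the only point requiring care is the bookkeeping in the bracket expansion to confirm that all the derivative-of-coefficient terms avoid the $N$-direction, so that the transformation is the clean congruence-plus-scaling above rather than something with an additive correction.
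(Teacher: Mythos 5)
Your argument is correct and complete: the computation that a change of admissible frame acts on $(c_{ij})$ by $c' = \lambda^{-1} G c G^{*}$ with $G$ invertible and $\lambda$ real and nowhere zero, followed by Sylvester's law of inertia, is exactly the standard proof of this fact. The paper itself states the proposition without proof (it is quoted as a known lemma from the theory of $CR$ structures), so there is nothing to compare against; your write-up supplies precisely the missing verification, including the correct observation that the derivative-of-coefficient terms in the bracket stay inside $S\oplus\bar S$ and hence do not perturb the $N$-component.
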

\begin{definition}
\emph{$X$ is said to be strongly pseudoconvex if the Levi form is
positive definite at each point of $X$.}
\end{definition}

\begin{definition}
\emph{Let $X$ be a CR manifold of real dimension $2n-1$. $X$ is said
to be Calabi--Yau if there exists a nowhere vanishing holomorphic
section in $\Gamma(\wedge^n\widehat{T}(X)^*)$, where
$\widehat{T}(X)$ is the holomorphic tangent bundle of $X$.}
\end{definition}

\textbf{Remark}:
\begin{enumerate}
\item[1 .]
Let $X$ be a $CR$ manifold of real dimension $2n-1$ in
$\mathbb{C}^n$. Then $X$ is a Calabi--Yau $CR$ manifold.
\item[2 .]
Let $X$ be a strongly pseudoconvex $CR$ manifold of real dimension
$2n-1$ contained in the boundary of bounded strongly pseudoconvex
domain in $\mathbb{C}^{n+1}$. Then $X $ is a Calabi--Yau $CR$
manifold.
\end{enumerate}

\section{\textbf{Invariants of singularities and $CR$-invariants}}
Let V be a $n$-dimensional complex analytic subvariety in
$\mathbb{C}^N$ with only isolated singularities. In [Ya2], Yau
considered four kinds of sheaves of germs of holomorphic $p$-forms:
\begin{enumerate}
\item[1 .]
$\bar{\Omega}^p_V:=\pi_*\Omega^p_M$, where $\pi: M\longrightarrow V$
is a resolution of singularities of $V$.
\item[2 .]
$\bar{\bar{\Omega}}^p_V:=\theta_*\Omega^p_{V\backslash V_{sing}}$
where $\theta : V\backslash V_{sing}\longrightarrow V$ is the
inclusion map and $V_{sing}$ is the singular set of $V$.
\item[3 .]
$\Omega^p_V:=\Omega_{\mathbb{C}^N}^p/\mathscr{K}^p$, where
$\mathscr{K}^p=\{f\alpha+dg\wedge\beta :
\alpha\in\Omega_{\mathbb{C}^N}^p; \beta\in
\Omega_{\mathbb{C}^N}^{p-1}; f, g\in\mathscr{I}\}$ and $\mathscr{I}$
is the ideal sheaf of $V$ in $\mathbb{C}^N$.
\item[4 .]
$\widetilde{\Omega}^p_V:=\Omega_{\mathbb{C}^N}^p/\mathscr{H}^p$,
where $\mathscr{H}^p=\{\omega\in\Omega_{\mathbb{C}^N}^p:
\omega|_{V\backslash V_{sing}}=0\}$.
\end{enumerate}

Clearly ${\Omega}^p_V$, $\widetilde{\Omega}^p_V$ are coherent.
$\bar{\Omega}^p_V$ is a coherent sheaf because $\pi$ is a proper
map. $\bar{\bar{\Omega}}^p_V$ is also a coherent sheaf by a theorem
of Siu (see Theorem A of [Si]). If $V$ is a normal variety, the
dualizing sheaf $\omega_V$ of Grothendieck is actually the sheaf
$\bar{\bar{\Omega}}^n_V$.

\begin{definition}
\emph{The Siu complex is a complex of coherent sheaves $J^{\bullet}$
supported on the singular points of $V$ which is defined by the
following exact sequence:
\begin{equation}\label{J}
0\longrightarrow\bar{\Omega}^{\bullet}\longrightarrow\bar{\bar{\Omega}}^{\bullet}\longrightarrow
J^{\bullet}\longrightarrow 0.
\end{equation}}
\end{definition}

\begin{definition}\label{s inv}
\emph{Let $V$ be a $n$-dimensional Stein space with $0$ as its only
singular point. Let $\pi: (M, A)\rightarrow (V, 0)$ be a resolution
of the singularity with $A$ as exceptional set. The geometric genus
$p_g$ and the irregularity $q$ of the singularity are defined as
follows (see [Ya2, St-St]):
\begin{equation}
p_g:= dim \Gamma(M\backslash A, \Omega^n)/\Gamma(M, \Omega^n),
\end{equation}
\begin{equation}
q:= dim \Gamma(M\backslash A, \Omega^{n-1})/\Gamma(M, \Omega^{n-1}),
\end{equation}
\begin{equation}
g^{(p)}:= dim \Gamma(M, \Omega^{p}_M)/\pi^{*}\Gamma(V, \Omega^p_V).
\end{equation}
The $s$-invariant of the singularity is defined as follows:
\begin{equation}
s:= dim \Gamma(M\backslash A, \Omega^{n})/[\Gamma(M, \Omega^{n})+d
\Gamma(M\backslash A, \Omega^{n-1})].
\end{equation}}
\end{definition}

\begin{lemma}([Lu-Ya2])\label{pg q}
\emph{Let $V$ be a $n$-dimensional Stein space with $0$ as its only
singular point. Let $\pi: (M, A)\rightarrow (V, 0)$ be a resolution
of the singularity with $A$ as exceptional set. Let $J^{\bullet}$ be
the Siu complex of coherent sheaves supported on $0$. Then:
\begin{enumerate}
\item[1 .]
$dim J^n=p_g$.
\item[2 .]
$dim J^{n-1}=q$.
\item[3 .]
$dim J^{i}=0$, for $1\le i\le n-2$.
\end{enumerate}}
\end{lemma}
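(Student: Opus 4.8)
The plan is to compute each $\dim J^{i}$ as the dimension of a quotient of spaces of global holomorphic $i$-forms, and then to handle the range $1\le i\le n-2$ by an extension theorem for such forms across the exceptional set.

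First I would note that $\pi$ restricts to a biholomorphism of $M\setminus A$ onto the smooth space $V\setminus\{0\}$, so over $V\setminus\{0\}$ both sheaves $\bar{\Omega}^{i}_{V}$ and $\bar{\bar{\Omega}}^{i}_{V}$ coincide with $\Omega^{i}$; hence the quotient sheaf $J^{i}$ in \eqref{J} is concentrated at $0$ and $\dim J^{i}=\dim(J^{i})_{0}$. Taking $V$ to be a small Stein representative of the germ, the definition of the direct image gives $(\bar{\Omega}^{i}_{V})_{0}=(\pi_{*}\Omega^{i}_{M})_{0}=\Gamma(M,\Omega^{i}_{M})$, while $(\bar{\bar{\Omega}}^{i}_{V})_{0}=(\theta_{*}\Omega^{i}_{V\setminus\{0\}})_{0}=\Gamma(V\setminus\{0\},\Omega^{i})=\Gamma(M\setminus A,\Omega^{i}_{M})$, the last identification again via $\pi$. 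Since a holomorphic $i$-form on $M$ that vanishes on $M\setminus A$ vanishes identically, the canonical map $\bar{\Omega}^{i}_{V}\hookrightarrow\bar{\bar{\Omega}}^{i}_{V}$ is injective, and therefore
\[
\dim J^{i}=\dim\bigl(\Gamma(M\setminus A,\Omega^{i}_{M})/\Gamma(M,\Omega^{i}_{M})\bigr).
\]

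For $i=n$ and $i=n-1$ the right-hand side is, word for word, the definition of $p_{g}$ and of $q$ given in Definition \ref{s inv}; this proves (1) and (2). For (3) there remains the assertion that, when $1\le i\le n-2$, the restriction map $\Gamma(M,\Omega^{i}_{M})\to\Gamma(M\setminus A,\Omega^{i}_{M})$ is onto, i.e.\ that every holomorphic $i$-form on $M\setminus A$ extends holomorphically across the exceptional set $A$. This is the substantive point and is where I expect the real difficulty to lie. It cannot be obtained from a Hartogs- or codimension-type argument, since $A$ is typically a hypersurface in $M$ and holomorphic forms do not extend over codimension-one sets in general; what must be exploited is that $A$ is contractible to the single point $0$.

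The input needed is precisely the extension theorem for holomorphic differential forms near a normal isolated singularity (Steenbrink--van Straten [St-St]): for a resolution $\pi\colon M\to V$ of an $n$-dimensional normal isolated singularity, holomorphic $p$-forms on the punctured neighbourhood extend over the exceptional set whenever $p\le n-2$. (If $V$ is not a priori normal, one first passes to its normalization, which is an isomorphism over $V\setminus\{0\}$ and leaves the quotient above unchanged.) I would either quote this theorem directly or reproduce its proof, the core of which is a mixed-Hodge-theoretic comparison of the Hodge and weight filtrations on the cohomology of $M\setminus A$ and of $A$ (an $L^{2}$-estimate argument in the spirit of Grauert--Riemenschneider vanishing is an alternative route); the bound $p\le n-2$ is exactly what forces the obstruction to extension to vanish. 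Granting this, $\dim J^{i}=0$ for $1\le i\le n-2$, which completes (3).
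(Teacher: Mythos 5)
The paper states this lemma without proof, citing [Lu-Ya2], and your argument reconstructs that proof along essentially the same lines: identify the stalk of $J^{i}$ at $0$ with $\Gamma(M\setminus A,\Omega^{i}_{M})/\Gamma(M,\Omega^{i}_{M})$ (most cleanly via Cartan's Theorem B applied to the defining exact sequence, as the paper itself does for $\mathscr{G}^{(1,1)}$ in Lemma \ref{loc inv2}), read off (1) and (2) from Definition \ref{s inv}, and reduce (3) to the extension theorem for holomorphic $p$-forms with $p\le n-2$ across the exceptional set of a resolution of a normal isolated singularity. Your identification of that extension theorem ([St-St], or equivalently the local-cohomology/vanishing-theorem argument) as the one substantive input, together with the normalization caveat, is exactly right.
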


\begin{proposition}([Lu-Ya2])
\emph{Let $V$ be a $n$-dimensional Stein space with $0$ as its only
singular point. Let $\pi: (M, A)\rightarrow (V, 0)$ be a resolution
of the singularity with $A$ as exceptional set. Let $J^{\bullet}$ be
the Siu complex of coherent sheaves supported on $0$. Then the
$s$-invariant is given by
\begin{equation}
s:=dim H^n(J^{\bullet})=p_g-q
\end{equation}
and
\begin{equation}
dim H^{n-1}(J^{\bullet})=0.
\end{equation}}
\end{proposition}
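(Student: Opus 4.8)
The plan is to reduce $J^{\bullet}$ to a two–term complex and then to isolate the one substantive point. First, $J^{i}=0$ for $0\le i\le n-2$: this is Lemma~\ref{pg q}(3) for $1\le i\le n-2$, while for $i=0$ one has $\bar{\Omega}^{0}_{V}=\pi_{*}\mathcal{O}_{M}=\theta_{*}\mathcal{O}_{V\setminus V_{sing}}=\bar{\bar{\Omega}}^{0}_{V}$, all three coinciding with the push–forward of the structure sheaf of the normalization of $V$ by Hartogs' extension theorem (recall $n\ge 2$), so $J^{0}=0$. Writing $d$ for the differential of $J^{\bullet}$ induced by exterior differentiation, the complex is therefore concentrated in degrees $n-1$ and $n$, whence $H^{n-1}(J^{\bullet})=\ker(d\colon J^{n-1}\to J^{n})$ and $H^{n}(J^{\bullet})=\operatorname{coker}(d\colon J^{n-1}\to J^{n})$. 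Since $J^{\bullet}$ is supported at $0$ I pass to stalks: for a small Stein representative $V$ with resolution $\pi\colon M\to V$ one has, as in the definitions of $p_{g}$ and $q$ above, $\bar{\bar{\Omega}}^{p}_{V,0}=\Gamma(M\setminus A,\Omega^{p}_{M})$ and $\bar{\Omega}^{p}_{V,0}=\Gamma(M,\Omega^{p}_{M})$, so that $J^{p}_{0}=\Gamma(M\setminus A,\Omega^{p}_{M})/\Gamma(M,\Omega^{p}_{M})$ with $d$ induced by exterior differentiation.

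With these identifications $H^{n}(J^{\bullet})_{0}=\Gamma(M\setminus A,\Omega^{n}_{M})/[\Gamma(M,\Omega^{n}_{M})+d\,\Gamma(M\setminus A,\Omega^{n-1}_{M})]$, which is precisely the definition of the $s$–invariant; thus $\dim H^{n}(J^{\bullet})=s$. Everything else reduces to showing that $d\colon J^{n-1}_{0}\to J^{n}_{0}$ is \emph{injective}: this gives $H^{n-1}(J^{\bullet})=0$ at once, and since $\dim J^{n-1}_{0}=q$ and $\dim J^{n}_{0}=p_{g}$ by Lemma~\ref{pg q}(1)--(2), the rank--nullity theorem then forces $s=\dim\operatorname{coker}(d)=p_{g}-q$. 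In concrete terms, I must prove: if $\omega\in\Gamma(M\setminus A,\Omega^{n-1}_{M})$ and $d\omega$ extends to a holomorphic $n$–form on all of $M$, then $\omega$ itself extends holomorphically across $A$.

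This is the heart of the matter and where I expect the real work. The long exact sequence of $0\to\bar{\Omega}^{\bullet}\to\bar{\bar{\Omega}}^{\bullet}\to J^{\bullet}\to 0$ does not help here, since it merely reduces the vanishing of $H^{n-1}(J^{\bullet})$ to equivalent reformulations of itself --- the ambient holomorphic de Rham groups $H^{n}_{h}(M)$ and $H^{n-1}_{h}(M\setminus A)$ need not vanish (already for a simple elliptic singularity they do not). I would instead argue directly on a simple normal crossing model: since $\pi_{*}\Omega^{p}_{M}$ is independent of the resolution, assume $A=\bigcup_{i}A_{i}$ is SNC and work in local coordinates with $A=\{z_{1}\cdots z_{k}=0\}$. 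Expanding $\omega$ in Laurent series along the $z_{j}$, the requirement that $d\omega$ be holomorphic forces the top polar coefficients not involving the corresponding $dz_{j}$ to vanish and the remaining principal part to be $d$–closed; moreover a pole of order $\ge 2$ is exact modulo lower–order poles --- a term $f_{0}(w)\,z_{j}^{-m}\,dz_{j}\wedge(\cdots)$ equals, up to a pole of order $m-1$, the differential of a form with a pole of order $m-1$ --- so, modulo $d\,\Gamma(M\setminus A,\Omega^{n-2}_{M})$ and $\Gamma(M,\Omega^{n-1}_{M})$, I may assume $\omega$ is a global section of $\Omega^{n-1}_{M}(\log A)$ whose residue along $A_{i}$ is a $d$–closed holomorphic $(n-2)$–form on the compact component $A_{i}$. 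The main obstacle is then to kill these residues: via the residue exact sequence they lie in the kernel of a connecting homomorphism into $H^{1}(M,\Omega^{1}_{M})$ which on the $i$‑th summand is essentially $c_{1}(\mathcal{O}_{M}(A_{i}))$, and the negativity of the exceptional set (for $n=2$ this is the negative–definiteness of $(A_{i}\cdot A_{j})$ of Mumford--Grauert, making the $c_{1}(\mathcal{O}_{M}(A_{i}))$ linearly independent, so all residues vanish), whence $\omega$ is holomorphic, i.e.\ $\omega\in\Gamma(M,\Omega^{n-1}_{M})$. For $n\ge 3$ I expect the same strategy with the analogous cohomological positivity of $A$; making the reduction to logarithmic poles and the residue–vanishing step fully rigorous --- in particular patching the local "integrating–down" corrections into a single global one, which is where $J^{0}=0$ re‑enters --- is where the main difficulty lies.
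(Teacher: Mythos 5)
Your formal reduction is correct and is the same skeleton as in the cited source [Lu-Ya2] (the paper itself states this proposition without proof): $J^{i}=0$ for $i\le n-2$, so $J^{\bullet}$ is a two-term complex; $H^{n}(J^{\bullet})$ coincides with the defining quotient of the $s$-invariant; and both remaining assertions, $H^{n-1}(J^{\bullet})=0$ and $s=p_g-q$, reduce by rank--nullity to the single claim that $d\colon J^{n-1}\to J^{n}$ is injective, i.e.\ that $\omega\in\Gamma(M\setminus A,\Omega^{n-1})$ with $d\omega\in\Gamma(M,\Omega^{n})$ must itself lie in $\Gamma(M,\Omega^{n-1})$. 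You correctly identify this as the entire mathematical content, and you are also right that the hypercohomology long exact sequence is circular here (Theorem \ref{b to in} is a consequence of this proposition, not an input to it).

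The gap is that this one substantive claim is not proved. Your ``integrating down'' only shows that the order-$\ge 2$ polar part of $\omega$ is \emph{locally} $dg_{\alpha}$ for local meromorphic functions $g_{\alpha}$ with poles on $A$; but the statement to be proved is that $\omega$ itself is holomorphic, not that it is holomorphic modulo something, so you must rule out a genuinely nontrivial global principal part $\{g_{\alpha}\}$ along $A$, and subtracting local exact forms does not do this. The repair you propose --- that $J^{0}=0$ supplies the patching --- does not work: $J^{0}=0$ says a global holomorphic function on $M\setminus A$ extends across $A$, whereas the obstruction to assembling the $g_{\alpha}$ into a single meromorphic function lives in $H^{1}$ of a neighborhood of $A$ (essentially $H^{1}(M,\mathscr{O}_{M})$), which is nonzero precisely in the interesting cases ($p_g>0$); so one must argue directly that no nontrivial principal part can arise from such an $\omega$, and that argument is missing. (By contrast, your residue-killing step for $n=2$ via the negative definiteness of $(A_{i}\cdot A_{j})$ is fine once the reduction to logarithmic poles is granted.) Finally, the case $n\ge 3$ is only asserted by analogy; the ``cohomological positivity'' you invoke there is not formulated, and the residues are now forms on possibly singular compact components of $A$, so this case needs its own treatment. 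As it stands the proposal establishes the bookkeeping but not the theorem.
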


Let $X$ be a compact connected strongly pseudoconvex $CR$ manifold
of real dimension $3$, in the boundary of a bounded strongly
pseudoconvex domain $D$ in $\mathbb{C}^N$. By Harvey and Lawson
[Ha-La], there is a unique complex variety $V$ in $\mathbb{C}^N$
such that the boundary of $V$ is $X$. Let $\pi: (M, A_1,\cdots,
A_k)\rightarrow (V, 0_1,\cdots, 0_k)$ be a resolution of the
singularities with $A_i=\pi^{-1}(0_i)$, $1\le i\le k$, as
exceptional sets. Then the $s$-invariant defined in Definition
\ref{s inv} is $CR$ invariant, which is also called $s(X)$.

In order to solve the classical complex Plateau problem, we need to
find some $CR$-invariant which can be calculated directly from the
boundary $X$ and the vanishing of this invariant will give the
regularity of Harvey--Lawson solution to the complex Plateau
problem.

For this purpose, we define a new sheaf $\bar{\bar{\Omega}}_V^{1,
1}$.

\begin{definition}
\emph{Let $(V, 0)$ be a Stein germ of a $2$-dimensional analytic
space with an isolated singularity at $0$. Define a sheaf of germs
$\bar{\bar{\Omega}}_V^{1, 1}$ by the sheaf associated to the
presheaf
\[
U\mapsto <\Gamma(U, \bar{\bar{\Omega}}^1_{V})\wedge \Gamma(U,
\bar{\bar{\Omega}}^1_{V})>,
\]
where $U$ is an open set of $V$.}
\end{definition}

\begin{lemma}\label{loc inv2}
\emph{Let $V$ be a $2$-dimensional Stein space with $0$ as its only
singular point in $\mathbb{C}^N$. Let $\pi: (M, A)\rightarrow (V,
0)$ be a resolution of the singularity with $A$ as exceptional set.
Then $\bar{\bar{\Omega}}_V^{1, 1}$ is coherent and there is a short
exact sequence
\begin{equation}
0\longrightarrow\bar{\bar{\Omega}}_V^{1,
1}\longrightarrow\bar{\bar{\Omega}}_V^2\longrightarrow\mathscr{G}^{(1,1)}\longrightarrow
0
\end{equation}
where $\mathscr{G}^{(1,1)}$ is a sheaf supported on the singular
point of $V$. Let
\begin{equation}
G^{(1,1)}(M\backslash A):=\Gamma(M\backslash A,
\Omega^2_M)/<\Gamma(M\backslash A, \Omega^1_M)\wedge
\Gamma(M\backslash A, \Omega^1_M)>;
\end{equation}
then $dim \mathscr{G}^{(1,1)}_0=dim G^{(1,1)}(M\backslash A)$.}
\end{lemma}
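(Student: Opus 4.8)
The plan is to establish the three assertions in turn: coherence of $\bar{\bar{\Omega}}_V^{1,1}$, exactness of the displayed sequence with $\mathscr{G}^{(1,1)}$ supported at the singular point, and the dimension identity $\dim\mathscr{G}^{(1,1)}_0 = \dim G^{(1,1)}(M\backslash A)$. First I would observe that away from $0$ the sheaf $\bar{\bar{\Omega}}_V^{1,1}$ agrees with $\Omega^2_{V\backslash V_{sing}}$: on the smooth locus every holomorphic $2$-form is locally a sum of wedges of holomorphic $1$-forms (choose local coordinates $z_1,z_2$ and write $f\,dz_1\wedge dz_2 = (f\,dz_1)\wedge dz_2$), so the natural map $\bar{\bar{\Omega}}_V^{1,1}\to\bar{\bar{\Omega}}_V^2$ is an isomorphism on $V\backslash\{0\}$. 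Hence the cokernel $\mathscr{G}^{(1,1)}$ is automatically supported at $0$, and the map is injective because $\bar{\bar{\Omega}}_V^2$ is torsion-free (being a subsheaf of the pushforward $\theta_*\Omega^2_{V\backslash V_{sing}}$ from a dense open set) while any kernel would be supported at the point, hence torsion. This gives the short exact sequence once injectivity and the support statement are in place.

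For coherence, I would argue that $\bar{\bar{\Omega}}_V^{1,1}$ is a subsheaf of the coherent sheaf $\bar{\bar{\Omega}}_V^2$ (coherent by Siu's theorem, as recalled in the text), and that it is finitely generated: on $V\backslash\{0\}$ it equals $\bar{\bar{\Omega}}_V^2$, and near $0$ it is generated as an $\mathcal{O}_V$-module by finitely many wedges $\omega_i\wedge\omega_j$ of generators $\omega_i$ of $\bar{\bar{\Omega}}_V^1$ (which is coherent, again by Siu). A subsheaf of a coherent sheaf on a complex space that is locally finitely generated is coherent, so $\bar{\bar{\Omega}}_V^{1,1}$ is coherent; consequently $\mathscr{G}^{(1,1)}$, being a quotient of coherent sheaves, is coherent, and since it is supported at a point it is a skyscraper sheaf of finite length.

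For the dimension identity, I would pull back along the resolution $\pi\colon (M,A)\to(V,0)$. Since $\pi$ restricts to a biholomorphism $M\backslash A \xrightarrow{\sim} V\backslash\{0\}$, and since $\bar{\bar{\Omega}}_V^p = \theta_*\Omega^p_{V\backslash V_{sing}}$, taking global sections over $V$ and over $M\backslash A$ gives $\Gamma(V,\bar{\bar{\Omega}}_V^p) = \Gamma(M\backslash A,\Omega^p_M)$ for all $p$; the same bookkeeping identifies $\Gamma(V,\bar{\bar{\Omega}}_V^{1,1})$ with the subspace $\langle\Gamma(M\backslash A,\Omega^1_M)\wedge\Gamma(M\backslash A,\Omega^1_M)\rangle$ of $\Gamma(M\backslash A,\Omega^2_M)$ (here one uses that $V$ is Stein so that global wedges generate the same thing as the sheaf-theoretic construction — this is where I would be most careful). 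Now take the long exact sequence in cohomology of the short exact sequence over the Stein space $V$: the higher cohomology of the coherent sheaves $\bar{\bar{\Omega}}_V^{1,1}$ and $\bar{\bar{\Omega}}_V^2$ vanishes by Cartan's Theorem B, so $\Gamma(V,\mathscr{G}^{(1,1)})$ is exactly the cokernel $\Gamma(M\backslash A,\Omega^2_M)\big/\langle\Gamma(M\backslash A,\Omega^1_M)\wedge\Gamma(M\backslash A,\Omega^1_M)\rangle = G^{(1,1)}(M\backslash A)$. Since $\mathscr{G}^{(1,1)}$ is a skyscraper at $0$, $\Gamma(V,\mathscr{G}^{(1,1)}) = \mathscr{G}^{(1,1)}_0$, giving the asserted equality of dimensions.

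The main obstacle I anticipate is the bookkeeping in the second step — precisely matching the sheaf-theoretic definition of $\bar{\bar{\Omega}}_V^{1,1}$ (sheaf associated to the presheaf of wedge products of sections) with the single space of global wedge products $G^{(1,1)}(M\backslash A)$. One must check that sheafification followed by taking global sections on the Stein space $V$ recovers exactly the sub-vector-space generated by global wedges, with no loss coming from the difference between "locally a sum of wedges" and "globally a sum of wedges"; the Stein hypothesis and Theorem B are what make this work, but the argument needs to be spelled out rather than asserted. Everything else — coherence, support at the point, injectivity — is either standard or an immediate consequence of torsion-freeness of $\bar{\bar{\Omega}}_V^2$.
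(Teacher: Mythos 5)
Your proposal is correct and follows essentially the same route as the paper: coherence via finite generation of the wedge presheaf (Siu) plus the fact that $\bar{\bar{\Omega}}_V^{1,1}$ is a subsheaf of the coherent sheaf $\bar{\bar{\Omega}}_V^2$, support of $\mathscr{G}^{(1,1)}$ at $0$ because stalks agree on the smooth locus, and the dimension identity via Cartan's Theorem B on the Stein space $V$ together with the identification $\Gamma(V,\bar{\bar{\Omega}}_V^p)\cong\Gamma(M\backslash A,\Omega^p_M)$. The one subtlety you flag --- that global sections of the sheafified wedge presheaf over Stein $V$ coincide with the span of global wedge products --- is indeed the only point needing care, and the paper simply asserts it under the same appeal to Theorem B.
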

\begin{proof}
Since the sheaf of germ $\bar{\bar{\Omega}}^1_V$ is coherent by a
theorem of Siu (see Theorem A of [Si]), for any point $w\in V$ there
exists an open neighborhood $U$ of $w$ in $V$ such that $\Gamma(U,
\bar{\bar{\Omega}}^1_{V})$ is finitely generated over $\Gamma(U,
\mathscr{O}_{V})$. So $\Gamma(U, \bar{\bar{\Omega}}^1_{V})\wedge
\Gamma(U, \bar{\bar{\Omega}}^1_{V})$ is finitely generated over
$\Gamma(U, \mathscr{O}_{V})$, which means $\Gamma(U,
\bar{\bar{\Omega}}_{V}^{1, 1})$ is finitely generated over
$\Gamma(U, \mathscr{O}_{V})$ -- i.e., $\bar{\bar{\Omega}}_{V}^{1,
1}$ is a sheaf of finite type. It is obvious that
$\bar{\bar{\Omega}}_{V}^{1, 1}$ is a subsheaf of
$\bar{\bar{\Omega}}_{V}^2$ which is also coherent. So
$\bar{\bar{\Omega}}_{V}^{1, 1}$ is coherent.

Notice that the stalk of $\bar{\bar{\Omega}}_V^{1, 1}$ and
$\bar{\bar{\Omega}}_V^2$ coincide at each point different from the
singular point $0$, so $\mathscr{G}^{(1,1)}$ is supported at $0$.
And from Cartan Theorem B
$$dim
\mathscr{G}^{(1,1)}_0=dim \Gamma(V,
\bar{\bar{\Omega}}_V^2)/\Gamma(V, \bar{\bar{\Omega}}_V^{1, 1}) =dim
G^{(1,1)}(M\backslash A).$$
\end{proof}

Thus, from Lemma \ref{loc inv2}, we can define a local invariant of
a singularity which is independent of resolution.
\begin{definition}
\emph{Let $V$ be a $2$-dimensional Stein space with $0$ as its only
singular point. Let $\pi: (M, A)\rightarrow (V, 0)$ be a resolution
of the singularity with $A$ as exceptional set. Let
\begin{equation}
g^{(1,1)}(0):=dim \mathscr{G}^{(1,1)}_0=dim G^{(1,1)}(M\backslash
A).
\end{equation}}
\end{definition}

We will omit $0$ in $g^{(1,1)}(0)$ if there is no confusion from the
context.

Let $\pi: (M, A_1,\cdots, A_k)\rightarrow (V, 0_1,\cdots, 0_k)$ be a
resolution of the singularities with $A_i=\pi^{-1}(0_i)$, $1\le i\le
k$, as exceptional sets. In this case, we still let
$$G^{(1,1)}(M\backslash A):=\Gamma(M\backslash A, \Omega^2_M)/<\Gamma(M\backslash A,
\Omega^1_M)\wedge \Gamma(M\backslash A, \Omega^1_M)>.$$

\begin{definition}
\emph{If $X$ is a compact connected strongly pseudoconvex $CR$
manifold of real dimension $3$ which is in the boundary of a bounded
strongly pseudoconvex domain $D$ in $\mathbb{C}^N$. Suppose $V$ in
$\mathbb{C}^N$ such that the boundary of $V$ is $X$. Let $\pi: (M,
A=\bigcup_i A_i)\rightarrow (V, 0_1,\cdots, 0_k)$ be a resolution of
the singularities with $A_i=\pi^{-1}(0_i)$, $1\le i\le k$, as
exceptional sets. Let
\begin{equation}
G^{(1,1)}(M\backslash A):=\Gamma(M\backslash A ,
\Omega^2_M)/<\Gamma(M\backslash A, \Omega^1_M)\wedge
\Gamma(M\backslash A, \Omega^1_M)>
\end{equation}
and
\begin{equation}
G^{(1,1)}(X):=\mathscr{S}^2(X)/<\mathscr{S}^1(X)\wedge
\mathscr{S}^1(X)>
\end{equation}
where $\mathscr{S}^p$ are holomorphic cross sections of
$\wedge^p(\widehat{T}(X)^*)$. Then we set
\begin{equation}
g^{(1,1)}(M\backslash A):=dim G^{(1,1)}(M\backslash A),
\end{equation}
\begin{equation}
g^{(1,1)}(X):=dim G^{(1,1)}(X).
\end{equation}}
\end{definition}

\begin{lemma}\label{boundary}
\emph{Let $X$ be a compact connected strongly pseudoconvex $CR$
manifold of real dimension $3$ which bounds a bounded strongly
pseudoconvex variety $V$ with only isolated singularities
$\{0_1,\cdots, 0_k\}$ in $\mathbb{C}^N$. Let $\pi: (M, A_1,\cdots,
A_k)\rightarrow (V, 0_1,\cdots, 0_k)$ be a resolution of the
singularities with $A_i=\pi^{-1}(0_i)$, $1\le i\le k$, as
exceptional sets. Then $g^{(1,1)}(X)=g^{(1,1)}(M\backslash A)$,
where $A=\cup A_i$, $1\le i\le k$.}
\end{lemma}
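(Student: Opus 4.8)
The plan is to identify the two vector spaces $G^{(1,1)}(X)$ and $G^{(1,1)}(M\backslash A)$ via the natural map between holomorphic forms on the boundary $X$ and holomorphic forms on the interior $M\backslash A$. The key technical input is that, since $X$ is a strongly pseudoconvex compact $CR$ manifold of dimension $3$ bounding the Stein variety $V$, a theorem of Tanaka (and the analysis already used in this series of papers) gives a canonical isomorphism $\mathscr{S}^p(X)\cong \Gamma(V\backslash V_{sing},\Omega^p)$ for the holomorphic $p$-forms. Combining this with the standard identification $\Gamma(V\backslash V_{sing},\Omega^p)\cong\Gamma(M\backslash A,\Omega^p_M)$ coming from the resolution $\pi$ (holomorphic forms on the smooth locus extend across the exceptional set in the weak sense recorded by $\bar{\bar\Omega}$, and pull back isomorphically), I obtain compatible isomorphisms $\mathscr{S}^1(X)\cong\Gamma(M\backslash A,\Omega^1_M)$ and $\mathscr{S}^2(X)\cong\Gamma(M\backslash A,\Omega^2_M)$.

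First I would set up these isomorphisms carefully, being explicit that $\pi^*$ intertwines wedge products: if $\alpha,\beta$ are holomorphic $1$-forms on $V\backslash V_{sing}$ then $\pi^*(\alpha\wedge\beta)=\pi^*\alpha\wedge\pi^*\beta$, and similarly the $CR$-to-$V$ restriction map is multiplicative because it is induced by the bundle morphism $\phi:\widehat T(X)\to \widehat T(M)|_X$ described in Section 2, which is an algebra map on exterior powers. Hence the subspace $\langle\mathscr{S}^1(X)\wedge\mathscr{S}^1(X)\rangle\subseteq\mathscr{S}^2(X)$ corresponds exactly to $\langle\Gamma(M\backslash A,\Omega^1_M)\wedge\Gamma(M\backslash A,\Omega^1_M)\rangle\subseteq\Gamma(M\backslash A,\Omega^2_M)$ under the isomorphism $\mathscr{S}^2(X)\cong\Gamma(M\backslash A,\Omega^2_M)$. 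Taking quotients then yields an isomorphism $G^{(1,1)}(X)\cong G^{(1,1)}(M\backslash A)$, and in particular the equality of dimensions $g^{(1,1)}(X)=g^{(1,1)}(M\backslash A)$.

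The step I expect to be the main obstacle is justifying the isomorphism $\mathscr{S}^p(X)\cong\Gamma(M\backslash A,\Omega^p_M)$ for $p=1,2$ with full rigor — that is, that every holomorphic $p$-form on the $CR$ manifold $X$ extends to a holomorphic $p$-form on $V\backslash V_{sing}$ and conversely every such form on $V\backslash V_{sing}$ (equivalently every section of $\bar{\bar\Omega}^p_V$ over $V$, pulled back to $M\backslash A$) restricts to a holomorphic form on $X$, with the two operations mutually inverse. This requires the strong pseudoconvexity of $X$ and the Stein property of $V$ (so that the Harvey--Lawson variety is the one attached to $X$), together with a Hartogs-type extension / boundary regularity argument of the kind used in the earlier parts of this series and in [Lu-Ya2]; once $k$ singular points are present rather than one, one simply works near each $0_i$ separately. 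I would also need the elementary observation that $\Gamma(M\backslash A,\Omega^p_M)$ is unchanged if one replaces $M$ by a smaller neighborhood of $A$, so the global/local distinction is harmless. Granting these identifications, which are the substance of the $CR$–singularity dictionary already invoked repeatedly above, the remainder of the argument is the formal quotient computation sketched in the previous paragraph.
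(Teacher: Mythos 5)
Your proposal follows essentially the same route as the paper: extend holomorphic $p$-forms from $X$ into the interior, identify $\mathscr{S}^p(X)$ with $\Gamma(M\backslash A,\Omega^p_M)$ compatibly with wedge products, and pass to the quotient. The one step you flag as the main obstacle is exactly what the paper supplies: a holomorphic $p$-form on the strictly pseudoconvex boundary $X=\partial M$ extends to a one-sided collar $M_r=\{\phi\ge r\}$ for a one-convex exhaustion $\phi$ vanishing precisely on $A$, and then the Andreotti--Grauert finiteness/extension theorem [An-Gr] gives $\Gamma(M_r,\Omega^p_{M_r})\cong\Gamma(M\backslash A,\Omega^p_M)$, which completes the identification.
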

\begin{proof}
Take a one-convex exhausting function $\phi$ on $M$ such that
$\phi\ge 0$ on $M$ and $\phi(y)=0$ if and only if $y\in A$. Set
$M_r=\{y\in M, \phi(y)\ge r\}$. Since $X=\partial M$ is strictly
pseudoconvex, any holomorphic $p$-form $\theta\in \mathscr{S}^p(X)$
can be extended to a one-sided neighborhood of $X$ in $M$. Hence,
$\theta$ can be thought of as holomorphic $p$-form on $M_r$-- i.e.,
an element in $\Gamma(M_r, \Omega^p_{M_r})$. By Andreotti and
Grauert ([An-Gr]), $\Gamma(M_r, \Omega^p_{M_r})$ is isomorphic to
$\Gamma(M\backslash A, \Omega^p_{M})$. So
$g^{(1,1)}(X)=g^{(1,1)}(M\backslash A)$.
\end{proof}

 By Lemma \ref{boundary} and the proof of Lemma \ref{loc inv2}, we can get the following lemma easily.

\begin{lemma}\label{g11}
\emph{Let $X$ be a compact connected strongly pseudoconvex $CR$
manifold of real dimension $3$, which bounds a bounded strongly
pseudoconvex variety $V$ with only isolated singularities
$\{0_1,\cdots, 0_k\}$ in $\mathbb{C}^N$. Then $g^{(1,1)}(X)=\sum_i
g^{(1,1)}(0_i)=\sum_i dim \mathscr{G}^{(1,1)}_{0_i}$.}
\end{lemma}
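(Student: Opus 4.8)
The plan is to reduce the global statement about $X$ to a sum of local contributions at the singular points, using the two lemmas that have just been established. By Lemma \ref{boundary} we already know $g^{(1,1)}(X)=g^{(1,1)}(M\backslash A)$, so it suffices to identify $g^{(1,1)}(M\backslash A)$ with $\sum_i g^{(1,1)}(0_i)$, and then invoke Lemma \ref{loc inv2} (more precisely its displayed identity $\dim\mathscr{G}^{(1,1)}_0=\dim G^{(1,1)}(M\backslash A)$ in the one-point case) to rewrite each local term as $\dim\mathscr{G}^{(1,1)}_{0_i}$. So the whole content is the additivity of $G^{(1,1)}(M\backslash A)$ over the connected components of the exceptional locus.

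First I would set up the local pieces. For each $i$, choose a small Stein neighborhood $U_i$ of $0_i$ in $V$ with $U_i\cap U_j=\emptyset$ for $i\neq j$, and let $M_i=\pi^{-1}(U_i)$, a neighborhood of the connected exceptional component $A_i$. The key step is the short exact sequence from Lemma \ref{loc inv2} applied on $V$ and on each $U_i$: since $\mathscr{G}^{(1,1)}$ is supported on the finite set $\{0_1,\dots,0_k\}$, we have $\mathscr{G}^{(1,1)}=\bigoplus_i (\iota_{0_i})_*\mathscr{G}^{(1,1)}_{0_i}$ as a skyscraper sheaf, and Cartan's Theorem B on the Stein space $V$ gives
\begin{equation}
\dim\bigl(\Gamma(V,\bar{\bar{\Omega}}_V^2)/\Gamma(V,\bar{\bar{\Omega}}_V^{1,1})\bigr)=\sum_i\dim\mathscr{G}^{(1,1)}_{0_i}.
\end{equation}
The same argument run on each $U_i$ separately (which has the single singular point $0_i$) gives $\dim\mathscr{G}^{(1,1)}_{0_i}=\dim G^{(1,1)}(M_i\backslash A_i)$, exactly the statement of Lemma \ref{loc inv2} in that setting.

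It then remains to show $G^{(1,1)}(V\backslash\{0_i\})$, i.e. the global object $\Gamma(V,\bar{\bar{\Omega}}^2_V)/\langle\Gamma(V,\bar{\bar{\Omega}}^1_V)\wedge\Gamma(V,\bar{\bar{\Omega}}^1_V)\rangle$, matches $G^{(1,1)}(M\backslash A)=\Gamma(M\backslash A,\Omega^2_M)/\langle\Gamma(M\backslash A,\Omega^1_M)\wedge\Gamma(M\backslash A,\Omega^1_M)\rangle$. This is where I expect the main (though mild) obstacle: one must check that $\pi$ induces isomorphisms $\Gamma(M\backslash A,\Omega^p_M)\cong\Gamma(V\backslash V_{sing},\Omega^p)=\Gamma(V,\bar{\bar{\Omega}}^p_V)$ compatibly with wedge product — which follows since $\pi$ restricts to a biholomorphism $M\backslash A\to V\backslash V_{sing}$ and pullback of forms commutes with $\wedge$ — and, crucially, that the subspace generated by wedges of global $1$-forms behaves well with respect to localization at the finitely many points, so that $\dim$ of the global quotient is the sum of the local ones. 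Concretely I would argue: the natural restriction map $G^{(1,1)}(M\backslash A)\to\prod_i G^{(1,1)}(M_i\backslash A_i)$ is surjective because any $1$- or $2$-form holomorphic near $A_i$ extends (by Hartogs/normality across the isolated singularity, then by the component decomposition) to all of $M\backslash A$, being zero near the other components, and it is injective because $\mathscr{G}^{(1,1)}$ is a skyscraper — a global $2$-form lying in the span of wedges locally at every $0_i$ lies, after subtracting a global combination of wedges, in a section of $\bar{\bar{\Omega}}^{1,1}_V$, hence is globally in the span. Assembling these identifications yields
\begin{equation}
g^{(1,1)}(X)=g^{(1,1)}(M\backslash A)=\sum_i\dim G^{(1,1)}(M_i\backslash A_i)=\sum_i\dim\mathscr{G}^{(1,1)}_{0_i}=\sum_i g^{(1,1)}(0_i),
\end{equation}
which is the assertion of Lemma \ref{g11}. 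The only genuinely delicate point is the extension/localization step, and it is handled exactly as in the coherence argument of Lemma \ref{loc inv2} together with the disjointness of the $U_i$ — which is why the paper remarks the lemma follows ``easily'' from Lemmas \ref{boundary} and \ref{loc inv2}.
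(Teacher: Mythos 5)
Your proof is correct and follows exactly the route the paper intends: the paper gives no explicit argument, stating only that the lemma follows from Lemma \ref{boundary} together with the proof of Lemma \ref{loc inv2}, and your Cartan Theorem B argument on the Stein space $V$ (using that $\mathscr{G}^{(1,1)}$ is a skyscraper sheaf supported on $\{0_1,\dots,0_k\}$) is precisely that reduction. One minor remark: the sentence about surjectivity of the restriction map via ``extension by zero'' is both imprecise (holomorphic forms on the connected space $M\backslash A$ cannot be extended by zero) and unnecessary, since the surjection $\Gamma(V,\bar{\bar{\Omega}}^2_V)\twoheadrightarrow\Gamma(V,\mathscr{G}^{(1,1)})=\bigoplus_i\mathscr{G}^{(1,1)}_{0_i}$ from Cartan B already delivers everything you need.
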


The following proposition is to show that $g^{(1,1)}$ is bounded
above.
\begin{proposition}
\emph{Let $V$ be a $2$-dimensional Stein space with $0$ as its only
singular point. Then $g^{(1,1)}\le p_g+g^{(2)}$.}
\end{proposition}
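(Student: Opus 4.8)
The plan is to show that the subspace $W:=\langle\Gamma(M\setminus A,\Omega^1_M)\wedge\Gamma(M\setminus A,\Omega^1_M)\rangle$ of $E:=\Gamma(M\setminus A,\Omega^2_M)$ already contains the pullback $\pi^{*}\Gamma(V,\Omega^2_V)$, and then to compare $E/W$ against the subspace $U:=\Gamma(M,\Omega^2_M)$ of $2$-forms extending holomorphically across $A$, for which $\dim E/U=p_g$ and $\dim U/\pi^{*}\Gamma(V,\Omega^2_V)=g^{(2)}$ by definition.

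The one substantial step is the inclusion $\pi^{*}\Gamma(V,\Omega^2_V)\subseteq W$. Since $\Omega^2_V=\Omega^2_{\mathbb{C}^N}/\mathscr{K}^2$ and $V$ is Stein, every class in $\Gamma(V,\Omega^2_V)$ is represented by a form $\omega=\sum_{i<j}f_{ij}\,dz_i\wedge dz_j$ with $f_{ij}\in\Gamma(V,\mathscr{O}_V)$, where $z_1,\dots,z_N$ are the ambient coordinates; set $\tilde z_i:=z_i\circ\pi\in\Gamma(M,\mathscr{O}_M)$. First I would check that $\pi^{*}$ is well defined on $\Omega^2_{\mathbb{C}^N}/\mathscr{K}^2$: each generator $f\alpha+dg\wedge\beta$ of $\mathscr{K}^2$ has $f,g$ in the ideal sheaf $\mathscr{I}$ of $V$, so $f\circ\pi\equiv 0$ and $g\circ\pi\equiv 0$ on $M$, whence $\pi^{*}(f\alpha)=0$ and $\pi^{*}(dg\wedge\beta)=d(g\circ\pi)\wedge\pi^{*}\beta=0$. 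Therefore
\[
\pi^{*}\omega=\sum_{i<j}(f_{ij}\circ\pi)\,d\tilde z_i\wedge d\tilde z_j=\sum_{i<j}\big((f_{ij}\circ\pi)\,d\tilde z_i\big)\wedge d\tilde z_j ,
\]
and each summand is a wedge of two elements of $\Gamma(M,\Omega^1_M)\subseteq\Gamma(M\setminus A,\Omega^1_M)$, so $\pi^{*}\omega\in W$.

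Granting this, the estimate is pure linear algebra: with $W\subseteq W+U\subseteq E$ one has
\[
g^{(1,1)}=\dim E/W=\dim E/(W+U)+\dim (W+U)/W=\dim E/(W+U)+\dim U/(U\cap W).
\]
Here $E/(W+U)$ is a quotient of $E/U$, so $\dim E/(W+U)\le p_g$; and $\pi^{*}\Gamma(V,\Omega^2_V)$ lies in $U$ trivially and in $W$ by the previous paragraph, hence in $U\cap W$, so $U/(U\cap W)$ is a quotient of $U/\pi^{*}\Gamma(V,\Omega^2_V)$ and $\dim U/(U\cap W)\le g^{(2)}$. Adding the two bounds gives $g^{(1,1)}\le p_g+g^{(2)}$. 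The main obstacle is really just the inclusion $\pi^{*}\Gamma(V,\Omega^2_V)\subseteq W$, and inside it the one thing that needs care is the well-definedness of $\pi^{*}$ on $\Omega^2_V$ — that the generators of $\mathscr{K}^2$ pull back to zero; note also that no choice of resolution enters, since $g^{(1,1)}$, $p_g$, $g^{(2)}$ are all resolution-independent by the preceding lemmas and definitions.
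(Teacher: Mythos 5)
Your proof is correct and follows essentially the same route as the paper: the key point in both is the chain of inclusions $\pi^{*}\Gamma(V,\Omega^2_V)=\langle\pi^{*}\Gamma(V,\Omega^1_V)\wedge\pi^{*}\Gamma(V,\Omega^1_V)\rangle\subseteq\Gamma(M\setminus A,\Omega^1_M)\wedge\Gamma(M\setminus A,\Omega^1_M)$, after which the bound is linear algebra. You merely make explicit two things the paper leaves implicit (the well-definedness of $\pi^{*}$ on $\Omega^2_V=\Omega^2_{\mathbb{C}^N}/\mathscr{K}^2$ and the final dimension count), which is a welcome but not substantively different elaboration.
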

\begin{proof}
Since $$g^{(1,1)}=dim\Gamma(M\backslash A,
\Omega^2_M)/<\Gamma(M\backslash A, \Omega^1_M)\wedge
\Gamma(M\backslash A, \Omega^1_M)>,$$
\[p_g=dim \Gamma(M\backslash A,
\Omega^2_M)/\Gamma(M, \Omega^2_M),
\]
\[
g^{(2)}:= dim \Gamma(M, \Omega^{2})/\pi^{*}\Gamma(V, \Omega^2_V),
\] and
\begin{equation}
\begin{split}
\pi^{*}\Gamma(V, \Omega^2_V)&=<\pi^{*}\Gamma(V, \Omega^1_V)\wedge
\pi^{*}\Gamma(V, \Omega^1_V)>\\
&\subseteq \Gamma(M, \Omega^1_M)\wedge \Gamma(M, \Omega^1_M)\\
&\subseteq \Gamma(M\backslash A, \Omega^1_M)\wedge\Gamma(M\backslash
A, \Omega^1_M),
\end{split}
 \end{equation} the result follows.
\end{proof}

The following theorem is the crucial part for the classical complex
Plateau problem.

\begin{theorem}\label{new inv}
\emph{Let $V$ be a $2$-dimensional Stein space with $0$ as its only
normal singular point with $\mathbb{C}^*$-action. Let $\pi: (M,
A)\rightarrow (V, 0)$ be a minimal good resolution of the
singularity with $A$ as exceptional set, then $g^{(1,1)}\ge 1$.}
\end{theorem}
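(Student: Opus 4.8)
The plan is to exploit the $\mathbb{C}^*$-action to realize $(V,0)$ as a (quasi-homogeneous) cone-like singularity, so that the minimal good resolution $\pi:(M,A)\to(V,0)$ has an exceptional set $A$ whose dual graph is star-shaped with a central curve $A_0$, and then to produce by hand a holomorphic $2$-form on $M\setminus A$ that cannot lie in $\langle\Gamma(M\setminus A,\Omega^1_M)\wedge\Gamma(M\setminus A,\Omega^1_M)\rangle$. Concretely, $V\setminus\{0\}$ is a Seifert $\mathbb{C}^*$-bundle over a compact Riemann surface $C$ (the central curve $A_0$ in the minimal good resolution, possibly orbifold), and one has the Euler vector field $E$ generating the action. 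Pairing with $E$ gives a map $\Gamma(M\setminus A,\Omega^2_M)\to\Gamma(M\setminus A,\Omega^1_M)$; the key point is to track what this does to decomposable forms versus to a well-chosen non-decomposable one.

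The main steps I would carry out are: (i) Normalize coordinates: since $(V,0)$ is normal with good $\mathbb{C}^*$-action, work on $\widetilde V:=V\setminus\{0\}$, which fibers over the curve $C$; pull back to $M\setminus A$, which deformation-retracts (along the fibers of the Seifert structure near $A_0$) onto a neighborhood of $A_0$. (ii) Decompose forms by weight: every holomorphic $p$-form on $M\setminus A$ (equivalently on $\widetilde V$, by normality and Andreotti--Grauert as in Lemma \ref{boundary}) decomposes into $\mathbb{C}^*$-weight spaces; the wedge of two $1$-forms of weights $a,b$ has weight $a+b$, and contraction with $E$ multiplies a weight-$w$ form by $w$ (up to normalization). (iii) Produce a candidate class: take the pullback $\omega$ of a generator of the dualizing sheaf — equivalently a weight-$\ell_0$ holomorphic $2$-form coming from $dz_i\wedge dz_j$ restricted to $V$ — of the \emph{lowest} positive weight occurring in $\Gamma(\widetilde V,\Omega^2)$. (iv) Show $\omega$ is not decomposable: if $\omega=\sum_k \alpha_k\wedge\beta_k$ with $\alpha_k,\beta_k$ holomorphic $1$-forms on $M\setminus A$, projecting to the weight-$\ell_0$ component and using that holomorphic $1$-forms on $M\setminus A$ have weights bounded below by the smallest weight $m_0>0$ of a holomorphic $1$-form, we would get $\ell_0\ge 2m_0$; deriving a contradiction requires comparing $\ell_0$ and $m_0$ via the adjunction/residue description of $\Omega^2$ on the Seifert bundle, where $\ell_0$ is essentially $m_0$ plus the weight of the "fiber coordinate", forcing $\ell_0<2m_0$. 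That inequality — that the minimal weight of a holomorphic $2$-form is strictly less than twice the minimal weight of a holomorphic $1$-form on the punctured cone — is what makes $\omega\notin\langle\Omega^1\wedge\Omega^1\rangle$, and it is where minimality and goodness of the resolution (no $(-1)$-curves, normal crossings) get used to rule out degenerate configurations.

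The step I expect to be the main obstacle is (iv), specifically establishing that strict weight inequality $\ell_0 < 2m_0$ in full generality — including the cases where $C$ has positive genus or where the Seifert data (orbifold points, Euler number) are such that $\Gamma(\widetilde V,\Omega^1)$ is large. One clean way around a case analysis is to argue cohomologically: pass to the graded ring $R=\bigoplus_w H^0(\widetilde V,\mathcal O)_w$ and the graded modules of forms over it, use that $\Omega^1_{\widetilde V}$ surjects onto $\omega_{\widetilde V}\otimes(\text{something of negative weight})$ via contraction with $E$ together with the weight-$0$ part only in trivial situations, and observe that the cone point being a singularity (not smooth) forces the "fiber weight" to be positive, hence $\ell_0 = m_0 + (\text{positive}) < 2m_0$ is \emph{not} automatic and must instead be replaced by the weaker-but-sufficient observation: the weight-$\ell_0$ piece of $\Omega^1\wedge\Omega^1$ is spanned by $df\wedge dg$ with $f,g$ homogeneous of positive weight summing to $\ell_0$, and by minimality of $\ell_0$ as a $2$-form weight while $C$ having a nonconstant function already in low weight, any such $df\wedge dg$ is proportional to a \emph{single} $2$-form, so the decomposable part has rank at most what the curve $C$ supplies, which is strictly less than $\dim\Gamma(\widetilde V,\Omega^2)_{\ell_0}$ unless $g^{(1,1)}=0$ — contradiction. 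I would therefore organize the write-up so that the weight bookkeeping on the Seifert cone does the work, reserving the minimal-good-resolution hypothesis for the normal-form reduction in step (i) and for excluding the smooth/quotient cases where the statement would fail.
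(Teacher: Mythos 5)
Your overall strategy --- use the $\mathbb{C}^*$-action, decompose forms into weight spaces, and exhibit a holomorphic $2$-form that cannot be a sum of wedges of holomorphic $1$-forms --- is the right family of ideas, and your observation that every holomorphic $1$-form on $V\setminus\{0\}$ is controlled by differentials of functions together with contractions $i_E$ of $2$-forms (via $L_E=di_E+i_Ed$ acting as multiplication by the weight) is exactly the decomposition the paper's proof rests on. But your step (iv) has a genuine gap, and you acknowledge it yourself: the inequality $\ell_0<2m_0$ between the minimal weight of a holomorphic $2$-form and twice the minimal weight of a holomorphic $1$-form on $V\setminus\{0\}$ is neither proved nor true in the required generality (if some coordinate function has weight strictly below $\ell_0$, then $m_0$ equals that function weight and nothing forces $\ell_0<2m_0$). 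The replacement you offer --- that the weight-$\ell_0$ piece of $\langle\Omega^1\wedge\Omega^1\rangle$ has dimension ``strictly less than $\dim\Gamma(\widetilde V,\Omega^2)_{\ell_0}$ unless $g^{(1,1)}=0$ --- contradiction'' --- is circular: it assumes the conclusion. As written, the argument does not close.

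The paper closes this gap by localizing rather than globalizing: instead of comparing minimal weights over all of $V\setminus\{0\}$, it fixes one irreducible exceptional component $A_k$ and compares pole orders along $A_k$. If $p_g>0$, choose $\omega$ with the most negative order $r<0$ along some $A_k$. Writing any holomorphic $1$-form on $M\setminus A$ as $df+i_{\xi_M}(\zeta)$ (your decomposition, transported to $M$ using $\pi_*\Theta_M=\Theta_V$, valid for the \emph{minimal good} resolution by Burns--Wahl), one checks in local coordinates at a smooth point of $A_k$ that $df\wedge dg$ has order at least $2\min_j r_j-1>r$ because the pullbacks $\pi^*z_j$ vanish along $A_k$ to positive orders $r_j$; that $i_{\xi_M}(\zeta)\wedge i_{\xi_M}(\varsigma)=0$ since both factors are proportional to the same covector; and that the cross terms have order at least $\min_j r_j+u>r$ because $\xi_M$ is tangent to $A$ (this is where minimality and goodness of the resolution actually enter, not in your step (i)). Hence every element of $\langle\Gamma(M\setminus A,\Omega^1_M)\wedge\Gamma(M\setminus A,\Omega^1_M)\rangle$ has order $>r$ along $A_k$, so $\omega$ is not in that span. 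If $p_g=0$ the singularity is rational, all forms extend across $A$, and one instead invokes the global generation of $K_M$ near the exceptional set to find $\omega\in\Gamma(M,\Omega^2_M)$ not vanishing along some $A_k$, then runs the same order count with $r=0$; your proposal does not address this case at all. To complete your write-up you would need both a correct substitute for the weight inequality (the local pole-order argument is one) and a separate treatment of the rational case.
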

\begin{proof}
If $dim \Gamma(M\backslash A, \Omega_M^2)/\Gamma(M, \Omega_M^2)>0$,
then there exists
$$\omega_0\in \Gamma(M\backslash A,
\Omega_M^2)\backslash\Gamma(M, \Omega_M^2).$$ So $\omega_0$ must
have pole along some irreducible component $A_k$ of $A$. Suppose
$\omega$ has the highest order of pole along $A_k$ and $\omega\in
\Gamma(M\backslash A, \Omega_M^2)$. Denote $Ord_{A_k}\omega=r<0$.
Let $z_1,\cdots, z_m$ be coordinate functions of $\mathbb{C}^m$.
Choose a point $b$ in $A_k$ which is a smooth point of $A$.
Let$(x_1, x_2)$ be a coordinate system centered at $b$ such that
$A_k$ is given locally by $x_1=0$ at $b$. Take the power series
expansion of $\pi^*(z_j)$ around $b$:
\begin{equation}
\pi^*(z_j)=x_1^{r_j}f_j, 1\le j\le m,
\end{equation}
where $f_j$ is holomorphic function such that $f_j(0, x_2)\neq 0$.
So by the choice of $\omega$, $min\{r_1, \dots, r_m\}>0>r$.

Let $\xi_V\in \Gamma(V, \Theta_V)$, where
$\Theta_V:={\mathscr{H}om}_{\mathscr{O}_V}(\Omega^1_V,
\mathscr{O}_V)$, denote the generating vector field of the
$\mathbb{C}^*$-action and $i_{\xi_V}$ be the contraction map. For
some $\alpha\in \Gamma(V, \bar{\bar{\Omega}}^1_V)$, write $\alpha$
as a sum $\sum\alpha^j$ of quasi-homogeneous elements where
$\alpha^j$ is a quasi-homogeous element of degree $l_j>0$.  Let
$L_{\xi_V}=i_{\xi_V}d+di_{\xi_V}$ be the Lie derivation. Then
\[l_j\alpha^j=L_{\xi_V}\alpha^j=i_{\xi_V}d(\alpha^j)+di_{\xi_V}(\alpha^j).
\]
So
\begin{equation}\label{seq2}
\Gamma(V, \bar{\bar{\Omega}}^1_V)=d(\Gamma(V, \mathscr{O}_V))+
i_{\xi_V}(\Gamma(V, \bar{\bar{\Omega}}^2_V)).
\end{equation}

For minimal good resolution, we have $\pi_*\Theta_M=\Theta_V$ (see
[Bu-Wa]), where $\Theta_M$ is the vector field on $M$. Thus, there
exists ${\xi_M}$ which is a lift of ${\xi_V}$ -- i.e.,
$\pi_*{\xi_M}={\xi_V}$. We know that ${\xi_M}$ is tangential to the
exceptional set, so
\[
\xi_M\circeq x_1^{a_1}p\frac{\partial}{\partial
x_1}+x_1^{a_2}q\frac{\partial}{\partial x_2}, a_1\ge 1, a_2\ge 0
\]
where $p$ and $q$ are holomorphic functions.

Let $i_{\xi_M}: \Gamma(M\backslash A, \Omega^2_M)\longrightarrow
\Gamma(M\backslash A, \Omega^1_M)$ be the contraction map
corresponding to $i_{\xi_V}$. If $\zeta\in \Gamma(M\backslash A,
\Omega^2_M)$ and $\zeta\circeq x _1^ugdx_1\wedge dx_2$, then
\[
i_{\xi_M}(\zeta)\circeq i_{\xi_M}(x_1^ugdx_1\wedge
dx_2)=-x_1^{u+a_2}qgdx_1+x_1^{u+a_1}pgdx_2.
\]
From (\ref{seq2}),
\[
\Gamma(M\backslash A, \Omega^1_M)=d(\Gamma(M\backslash A,
\mathscr{O}_M))+ i_{\xi_M}(\Gamma(M\backslash A, \Omega^2_M)).
\]
Since $V$ is normal , $g^{(0)}=0$ -- i.e., $\Gamma(M,
\mathscr{O}_M)=\pi^*(\Gamma(V, \mathscr{O}_V))$. Moreover, by the
normality of $(V, 0)$, $\Gamma(M, \mathscr{O}_M)=\Gamma(M\backslash
A, \mathscr{O}_M)$.

We now prove that $\omega$ is not contained in $<\Gamma(M\backslash
A, \Omega^1_M)\wedge\Gamma(M\backslash A, \Omega^1_M)>$. Consider
$\eta, \varphi \in \Gamma(M\backslash A, \Omega^1_M)$ locally around
$b$

Suppose $\eta=\eta_1+\eta_2$ and $\varphi=\varphi_1+\varphi_2$,
where $\eta_1$, $\varphi_1 \in d(\Gamma(M, \mathscr{O}_M))$,
$\eta_2$, $\varphi_2 \in i_{\xi_M}(\Gamma(M\backslash A,
\Omega^2_M))$. Let
$$\eta_2=i_{\xi_M}(\zeta), \hspace{.5cm}\zeta\circeq x _1^ugdx_1\wedge
dx_2, \hspace{.5cm} g(0, x_2)\neq 0$$ and
$$\varphi_2=i_{\xi_M}(\varsigma), \hspace{.5cm}\varsigma\circeq x _1^vhdx_1\wedge
dx_2, \hspace{.5cm} h(0, x_2)\neq 0.$$ So $u$ and $v$ are bounded
lower by $r$.

Then
$$\eta\wedge\varphi=\eta_1\wedge\varphi_1+(\eta_1\wedge\varphi_2+\eta_2\wedge\varphi_1)+\eta_2\wedge\varphi_2.$$

 Since
\[
d\pi^*(z_i)\wedge d\pi^*(z_j)=(r_ix_1^{r_i+r_j-1}f_i\frac{\partial
f_j}{\partial x_2}-r_jx_1^{r_i+r_j-1}f_j\frac{\partial f_i}{\partial
x_2})dx_1\wedge dx_2,
\]
$Ord_{A_k} \eta_1\wedge\varphi_1\ge 2\cdot min\{r_1, \dots,
r_m\}-1>r$.

Write $\eta_2$ and $\varphi_2$ locally around $b$:
$$\eta_2\circeq -x_1^{u+a_2}qgdx_1+x_1^{u+a_1}pgdx_2,$$
$$\varphi_2\circeq -x_1^{v+a_2}qhdx_1+x_1^{v+a_1}phdx_2.$$
So $\eta_2\wedge\varphi_2=\circeq0$.

Also notice that
$$d\pi^*(z_j)=r_jx_1^{r_j-1}f_jdx_1+x_1^{r_j}\frac{\partial
f_j}{\partial x_2}dx_2.$$

So $$Ord_{A_k} \eta_1\wedge \varphi_2\ge min\{r_1, \dots,
r_m\}+v>r$$ and
$$Ord_{A_k} \eta_2\wedge\varphi_1\ge
min\{r_1, \dots, r_m\}+u>r.$$

From the discussion above, we can get $Ord_{A_k} \eta\wedge
\varphi>r.$

Therefore, $\omega$ is not a linear combination of elements in
$<\Gamma(M\backslash A, \Omega^1_M)\wedge\Gamma(M\backslash A,
\Omega^1_M)>$.

\vspace{.5cm} If $dim \Gamma(M\backslash A, \Omega_M^2)/\Gamma(M,
\Omega_M^2)=0$, the singularity is rational. So irregularity $q=0$
(see [Ya4]). Then
$$\frac{\Gamma(M\backslash A , \Omega^2_M)}{<\Gamma(M\backslash A,
\Omega^1_M)\wedge \Gamma(M\backslash A, \Omega^1_M)>}=\frac{\Gamma(M
, \Omega^2_M)}{<\Gamma(M, \Omega^1_M)\wedge\Gamma(M,
\Omega^1_M)>},$$
\[
g^{(1,1)}=dim\frac{\Gamma(M , \Omega^2_M)}{<\Gamma(M,
\Omega^1_M)\wedge \Gamma(M, \Omega^1_M)>}.
\]
From [Ya3], the canonical bundle $K_M$ is generated by its global
sections in a neighborhood of the exceptional set. So there exists
$\omega\in\Gamma(M, \Omega^2_M)$ such that $\omega$ does not vanish
along some irreducible component $A_k$ of $A$. The rest of the
argument is same as those arguments above with $r=0$ -- i.e., we can
get $\omega$ is not a linear combination of elements in $<\Gamma(M,
\Omega^1_M)\wedge\Gamma(M, \Omega^1_M)>$. So
\[
g^{(1,1)}=dim\frac{\Gamma(M , \Omega^2_M)}{<\Gamma(M,
\Omega^1_M)\wedge \Gamma(M, \Omega^1_M)>}\ge 1.
\]

\end{proof}

\section{\textbf{The classical complex Plateau problem}}
In 1981, Yau [Ya] solved the classical complex Plateau problem for
the case $n\ge 3$.
\begin{theorem}([Ya])\label{Yau Pla}
\emph{Let $X$ be a compact connected strongly pseudoconvex $CR$
manifold of real dimension $2n-1$, $n\ge 3$, in the boundary of a
bounded strongly pseudoconvex domain $D$ in $\mathbb{C}^{n+1}$. Then
$X$ is a boundary of the complex sub-manifold $V\subset D-X$ if and
only if Kohn--Rossi cohomology groups $H^{p, q}_{K R}(X)$ are zeros
for $1\le q\le n-2$}
\end{theorem}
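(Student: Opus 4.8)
The plan is to use the Harvey--Lawson theorem to replace the extrinsic statement about $X$ by an intrinsic one about the singularities of the variety it bounds, then a comparison theorem to turn the Kohn--Rossi vanishing into the vanishing of local cohomological invariants of those singularities, and finally the structure of isolated hypersurface singularities in dimension $\geq 3$. By Harvey--Lawson, $X=\partial V$ for a Stein variety $V\subset\mathbb{C}^{n+1}$ with at most isolated singularities $\{0_1,\dots,0_k\}$; since $\dim_{\mathbb C}V=n$ and $V\subset\mathbb{C}^{n+1}$, each germ $(V,0_i)$ is an isolated hypersurface singularity, and the boundary regularity of the Harvey--Lawson solution places $V\setminus X$ inside $D$. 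Hence ``$X$ is a boundary of the complex submanifold $V\subset D-X$'' is equivalent to ``every germ $(V,0_i)$ is a regular point''. The forward implication is the easy one: if $V$ is smooth, take $M=V$ in what follows and combine Cartan's Theorem~B on the Stein manifold $V$ with the one-sided extension of $\bar\partial_b$-closed forms across the strongly pseudoconvex boundary $X$ to conclude $H^{p,q}_{KR}(X)=0$ for all $p$ and $1\leq q\leq n-2$. The content is the converse.

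For the converse I would first set up the comparison theorem. Fix a resolution $\pi\colon(M,A)\to(V,\{0_i\})$ with $A=\pi^{-1}\{0_i\}$. Using Tanaka's identification $H^{p,q}_{KR}(X)\cong H^q(X,E^p)$ with $E^p=\wedge^p\widehat T(X)^*$, the strong pseudoconvexity of $X$, and the Andreotti--Grauert finiteness and extension theorems --- the same mechanism used in the proof of Lemma~\ref{boundary}, now in degrees $q\geq 1$ --- one extends $\bar\partial_b$-cohomology classes from a one-sided collar of $X$ and obtains a comparison isomorphism $H^{p,q}_{KR}(X)\cong H^q(M\setminus A,\Omega^p_M)$ for $1\leq q\leq n-2$ (for $q=0$ this recovers $\Gamma(M\setminus A,\Omega^p_M)$ as in Lemma~\ref{boundary}); the range $1\leq q\leq n-2$ is precisely where the $\bar\partial_b$-complex of a strongly pseudoconvex $X$ is finite-dimensional. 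Since $M\setminus A\cong V\setminus\{0_i\}$ sits in the Stein space $V$, the local-cohomology sequence at the finite set $\{0_i\}$ (together with $H^q(V,\Omega^p)=0$ for $q\geq 1$) exhibits each $H^q(M\setminus A,\Omega^p_M)$ as a direct sum $\bigoplus_i H^{q+1}_{\{0_i\}}(V,\Omega^p)$ of invariants localized at the germs $(V,0_i)$ (the summand for $p=n$ being zero since the hypersurface $V$ is Cohen--Macaulay). Thus the hypothesis ``$H^{p,q}_{KR}(X)=0$ for all $p$, $1\leq q\leq n-2$'' is equivalent to the vanishing of all these local cohomology groups at every $0_i$, for $0\leq p\leq n-1$ and $1\leq q\leq n-2$.

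The heart is the local statement: \emph{a germ $(V,0)$ of an $n$-dimensional ($n\geq 3$) isolated hypersurface singularity whose local cohomology groups $H^{q+1}_{\{0\}}(V,\Omega^p)$ vanish for all $p$ and $1\leq q\leq n-2$ is smooth.} Here I would read these groups, equivalently the $R^q\pi_*\Omega^p_M$, together with their Grothendieck--Serre duals on the resolution --- which bring in the complementary cases (notably $q=n-1$ and the geometric genus $p_g$, not literally in the hypothesized range) --- as the Hodge-graded pieces of the mixed Hodge structure on the vanishing cohomology $\widetilde H^n(F)$ of the Milnor fibre $F$ of a local defining equation $f$ (Steenbrink's theory). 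Since $F$ is $(n-1)$-connected with $\widetilde H^n(F)\cong\mathbb{Z}^\mu$, $\mu$ the Milnor number, the vanishing of all of these graded pieces forces $\widetilde H^n(F)=0$, i.e.\ $\mu=0$, and hence $\{f=0\}$ is smooth at $0$ by Milnor's criterion. Equivalently, and in the language used elsewhere in the paper: the vanishing makes the link $X$ have the holomorphic De Rham cohomology of the sphere $S^{2n-1}$, and a $(2n-1)$-dimensional link of an isolated hypersurface singularity with $n\geq 3$ which is a homology sphere is the link of a smooth point. This is exactly where $n\geq 3$ is needed: for $n=2$ a homology-sphere link need not be smooth (Brieskorn), which is the obstruction that the new invariant $g^{(1,1)}(X)$ of the present paper is designed to remove.

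The step I expect to be the main obstacle is this last, local one: relating the $R^q\pi_*\Omega^p_M$ for an \emph{arbitrary} resolution of a hypersurface germ to the Hodge-theoretic and topological data of the Milnor fibre, and checking that the \emph{full} list of vanishings --- not merely $p_g=0$, which does not suffice, as the three-dimensional ordinary double point shows --- is what kills $\mu$. By comparison, the comparison theorem of the second paragraph, though technical, is a routine application of Andreotti--Grauert and Tanaka's formalism already used in the paper, and the Harvey--Lawson reduction of the first paragraph is immediate. Assembling the three steps: the Kohn--Rossi vanishing forces every $(V,0_i)$ to be smooth, so $V$ is a complex submanifold of $\mathbb{C}^{n+1}$ with $V\setminus X\subset D$ and $\partial V=X$, which is the assertion.
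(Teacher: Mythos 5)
This theorem is quoted in the present paper from [Ya] without proof, so the comparison is with Yau's original argument. Your skeleton --- Harvey--Lawson to reduce to isolated hypersurface germs, an Andreotti--Grauert/Tanaka comparison $H^{p,q}_{KR}(X)\cong H^q(M\setminus A,\Omega^p_M)$ for $1\le q\le n-2$, and then localization via $H^q(V\setminus\{0_i\},\Omega^p)\cong\bigoplus_i H^{q+1}_{\{0_i\}}(V,\Omega^p)$ --- is sound and is indeed the strategy of [Ya]. The forward direction is also fine. But the heart of the converse, your ``local statement,'' is not proved, and both justifications you offer for it are defective. The fallback formulation is simply false: it is \emph{not} true that a $(2n-1)$-dimensional link of an isolated hypersurface singularity which is a homology (even homotopy) sphere must be the link of a smooth point when $n\ge 3$. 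Brieskorn's examples $z_0^2+z_1^3+z_2^5+z_3^2=0$ in $\mathbb{C}^4$ (link $S^5$) and the exotic $7$-spheres arising from $\sum z_i^{a_i}$ in $\mathbb{C}^5$ are homotopy-sphere links of singular germs in exactly the range $n\ge3$. So the role of $n\ge3$ is not, as you suggest, that homology-sphere links become smooth; it is only that the window $1\le q\le n-2$ is nonempty.

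The primary route is also unsubstantiated: you assert that the groups $R^q\pi_*\Omega^p_M$ ``together with their Grothendieck--Serre duals'' are the Hodge-graded pieces of the mixed Hodge structure on $\widetilde H^n(F)$, and that their vanishing forces $\mu=0$. No such identification is established, and it cannot be an exact accounting: the Hodge numbers of the vanishing cohomology are governed by the embedded resolution of $f$ (or the $V$-filtration on the Gauss--Manin system), not by a resolution of $V$ alone, and the total dimension of the groups in your list does not equal $\mu$ in general (for hypersurface germs the nonzero groups in the range $1\le q\le n-2$ have dimensions given by the Tjurina number $\tau$, not by Hodge numbers summing to $\mu$). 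What [Ya] actually does at this point is elementary commutative algebra: using the conormal sequence of $V=\{f=0\}$ and the Koszul complex of the regular sequence $(\partial f/\partial z_0,\dots,\partial f/\partial z_n)$, the local cohomology groups $H^{q+1}_{\{0\}}(V,\Omega^p_V)$ are computed explicitly for $1\le q\le n-2$; they vanish except in two complementary bidegrees, where their dimension equals $\tau=\dim\mathcal{O}/(f,\partial f/\partial z_0,\dots,\partial f/\partial z_n)$, which is positive precisely when the germ is singular. Replacing your Hodge-theoretic paragraph by this computation (or by a correct and complete proof of the identification you posit) is necessary before the argument closes.
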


Next, we want to use our new invariants introduced in $\S$ 3 to
solve the classical complex Plateau problem for the case $n=2$.

First, we present some known results from the paper [Lu-Ya2].

\begin{theorem}([Lu-Ya2])\label{b to in}
\emph{Let $X$ be a compact connected $(2n-1)$-dimensional $(n \ge2)$
strongly pseudoconvex CR manifold. Suppose that X is the boundary of
an $n$-dimensional strongly pseudoconvex manifold $M$ which is a
modification of a Stein space $V$ with only isolated singularities
$\{0_1,\dots, 0_m\}$. Let $A$ be the maximal compact analytic set in
$M$ which can be blown down to $\{0_1,\dots, 0_m\}$. Then:
\begin{enumerate}
\item[1 .]
$H^q_h(X)\cong H^q_h(M\backslash A)\cong H^q_h(M), \hspace{1cm} 1\le
q\le n-1$.
\item[2 .]
$H^n_h(X)\cong H^n_h(M\backslash A), dim H^n_h(M\backslash A)=dim
H^n_h(M)+s$, where $s=s_1+\cdots+s_m$, $s_i$ is the $s$-invariant of
the singularity $(V, 0_i)$.
\end{enumerate}}
\end{theorem}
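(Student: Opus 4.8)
The plan is to reduce the statement, which is intrinsic to $X$, to a comparison between the holomorphic De Rham complexes of $M\backslash A$ and of $M$; the discrepancy between them is measured exactly by the Siu complex $J^{\bullet}$ of (\ref{J}), whose cohomology is already computed in the results quoted above. First I would carry out the step implicit in the proof of Lemma \ref{boundary}: since $X=\partial M$ is strongly pseudoconvex, every $\theta\in\mathscr{S}^p(X)$ extends holomorphically to a one-sided neighbourhood of $X$ in $M$, hence defines an element of $\Gamma(M_r,\Omega^p_{M_r})$ for small $r>0$, and by Andreotti--Grauert [An-Gr] the restriction $\Gamma(M\backslash A,\Omega^p_M)\to\Gamma(M_r,\Omega^p_{M_r})$ is an isomorphism. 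Because this extension is canonical it is compatible with $d$, so it identifies the holomorphic De Rham complex $\{\mathscr{S}^\bullet(X),d\}$ with $\{\Gamma(M\backslash A,\Omega^\bullet_M),d\}$; in particular $H^q_h(X)\cong H^q_h(M\backslash A)$ for every $q$.

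Next I would produce a short exact sequence of complexes of \emph{global} sections. Writing $\pi:M\to V$ for the modification, pushing the De Rham sheaves forward gives $\Gamma(V,\bar\Omega^p)=\Gamma(M,\Omega^p_M)$ and, since $\pi$ restricts to an isomorphism $M\backslash A\xrightarrow{\ \sim\ }V\backslash V_{sing}$, also $\Gamma(V,\bar{\bar\Omega}^p)=\Gamma(M\backslash A,\Omega^p_M)$. The defining sequence $0\to\bar\Omega^\bullet\to\bar{\bar\Omega}^\bullet\to J^\bullet\to0$ of (\ref{J}) is a short exact sequence of complexes of coherent sheaves on the Stein space $V$ (coherence of $\bar\Omega^p=\pi_*\Omega^p_M$ being clear since $\pi$ is proper), so Cartan's Theorem B gives $H^1(V,\bar\Omega^p)=0$ and taking sections over $V$ stays exact. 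This yields a short exact sequence of complexes
\[
0\longrightarrow\bigl(\Gamma(M,\Omega^\bullet_M),d\bigr)\longrightarrow\bigl(\Gamma(M\backslash A,\Omega^\bullet_M),d\bigr)\longrightarrow\bigl(J^\bullet,d\bigr)\longrightarrow 0,
\]
where $J^\bullet$ now denotes the finite-dimensional complex $\bigoplus_i J^\bullet_{0_i}$ of stalks at the singular points.

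Now I would run the long exact cohomology sequence. By Lemma \ref{pg q} (together with $J^0=0$, which holds because $V$ is reduced and its normalisation has singular locus of codimension $\ge 2$) the complex $J^\bullet$ is concentrated in degrees $n-1$ and $n$, with $\dim J^{n-1}=q$ and $\dim J^n=p_g$; and by the quoted computation of $H^\bullet(J^\bullet)$ from [Lu-Ya2] one has $\dim H^{n-1}(J^\bullet)=0$ and $\dim H^n(J^\bullet)=p_g-q=s$. Since moreover $H^i(J^\bullet)=0$ for $i\le n-2$ and $\Omega^{n+1}_M=0$ forces $H^{n+1}_h(M)=0$, the long exact sequence collapses to isomorphisms $H^q_h(M)\xrightarrow{\ \sim\ }H^q_h(M\backslash A)$ for $1\le q\le n-1$ and to a short exact sequence $0\to H^n_h(M)\to H^n_h(M\backslash A)\to H^n(J^\bullet)\to 0$, i.e. $\dim H^n_h(M\backslash A)=\dim H^n_h(M)+s$. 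Combined with the first step (and summing the local contributions $s=s_1+\cdots+s_m$ when there are several singular points) this gives both assertions.

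The main obstacle is the first step: the extension of holomorphic $p$-forms from the strongly pseudoconvex boundary $X$ into $M$, and the Andreotti--Grauert identification of $\Gamma(M_r,\Omega^p_{M_r})$ with $\Gamma(M\backslash A,\Omega^p_M)$ — this is the only genuinely analytic ingredient and is where strong pseudoconvexity is really used. On the algebraic side the one non-formal point is the vanishing $\dim H^{n-1}(J^\bullet)=0$ borrowed from [Lu-Ya2]: it is precisely what guarantees that the holomorphic $(n-1)$-forms on $M\backslash A$ failing to extend across $A$ (the classes responsible for $J^{n-1}\neq 0$, hence for the irregularity $q$) do not contribute to cohomology, so that $H^{n-1}_h(M\backslash A)\cong H^{n-1}_h(M)$ all the same; once these two facts are in hand the rest is diagram chasing.
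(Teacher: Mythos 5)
Your proposal is correct, and it is essentially the argument of the cited source: the paper itself states Theorem \ref{b to in} as a quoted result from [Lu-Ya2] without reproving it, and your reconstruction assembles exactly the ingredients the paper does record --- the boundary extension plus Andreotti--Grauert identification (as in the proof of Lemma \ref{boundary}), the Siu complex sequence (\ref{J}) made exact on global sections by Cartan's Theorem B over the Stein space $V$, and the computation $H^{n-1}(J^{\bullet})=0$, $\dim H^{n}(J^{\bullet})=p_g-q=s$ from Lemma \ref{pg q} and the subsequent proposition --- into the long exact sequence that yields both assertions. No gaps.
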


\begin{theorem}([Lu-Ya2])\label{s=0}
\emph{Let $(V, 0)$ be a Gorenstein surface singularity. Let $\pi:
M\rightarrow V$ be a good resolution with $A=\pi^{-1}(0)$ as
exceptional set. Assume that $M$ is contractible to $A$. If $s=0$,
then $(V, 0)$ is a quasi-homogeneous singularity,
$H^1(A,\mathbb{C})=0$, $dim H^1(M,\Omega^1)=dim H^2(A,\mathbb{C})
+dim H^1(M,\mathscr{O})$, and $H^1_h(M)=H^2_h(M)=0$. Conversely, if
$(V, 0)$ is a 2-dimensional quasi-homogeneous Gorenstein singularity
and $H^1(A,\mathbb{C})=0$, then the $s$-invariant vanishes.}
\end{theorem}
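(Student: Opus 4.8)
The plan is to handle the two implications together; throughout $n=2$, so that $s=\dim\Gamma(M\setminus A,\Omega^2_M)/[\Gamma(M,\Omega^2_M)+d\Gamma(M\setminus A,\Omega^1_M)]$, and (by the Proposition following Lemma~\ref{pg q}) $s=p_g-q$, so that $s=0$ is equivalent to $p_g=q$.

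First I would isolate the cohomological bookkeeping common to both directions. Since $M$ is a strongly pseudoconvex surface resolving the normal (here Gorenstein) singularity $(V,0)$: $M\setminus A\cong V\setminus\{0\}$ is Stein; $H^1(M,\Omega^2_M)=H^1(M,K_M)=0$ by Grauert--Riemenschneider; $H^2(M,\mathcal O_M)=0$ by the Leray spectral sequence of $\pi$; and $H^q(M,\mathbb C)\cong H^q(A,\mathbb C)$ because $M$ retracts onto $A$. The holomorphic Poincaré lemma makes the sheaf sequences
\begin{equation*}
0\to\mathbb C\to\mathcal O_M\xrightarrow{\,d\,}d\mathcal O_M\to 0,\qquad 0\to d\mathcal O_M\to\Omega^1_M\xrightarrow{\,d\,}\Omega^2_M\to 0
\end{equation*}
exact; feeding the vanishings above into the resulting long exact cohomology sequences identifies $H^1_h(M)\cong H^0(M,d\mathcal O_M)/dH^0(M,\mathcal O_M)$ and $H^2_h(M)\cong H^0(M,\Omega^2_M)/dH^0(M,\Omega^1_M)$, gives an injection $H^1_h(M)\hookrightarrow H^1(A,\mathbb C)$, and yields the identity
\begin{equation*}
\dim H^1(M,\Omega^1_M)=\dim H^2(A,\mathbb C)+\dim H^1(M,\mathcal O_M)-\dim\mathrm{im}\bigl(H^1(A,\mathbb C)\to H^1(M,\mathcal O_M)\bigr)-\dim H^2_h(M).
\end{equation*}
Hence, once one proves $H^1(A,\mathbb C)=0$ and $H^2_h(M)=0$, one gets $H^1_h(M)=0$ and the asserted dimension formula for free. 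So the forward direction reduces to: (a) $s=0$ forces a $\mathbb C^{*}$-action on $(V,0)$; (b) granting (a), $H^1(A,\mathbb C)=0$; (c) granting (a) and (b), $H^2_h(M)=0$.

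Step (a) is the heart of the theorem, and I expect it to be the only genuinely hard point. Gorenstein-ness supplies a nowhere-vanishing section $\omega$ of the dualizing sheaf $\omega_V$ on $V\setminus\{0\}$, i.e.\ an element of $\Gamma(M\setminus A,\Omega^2_M)$ with prescribed (possibly negative) orders along the components of $A$; the order-of-pole analysis in the proof of Theorem~\ref{new inv} shows that neither wedges of global holomorphic $1$-forms nor $d$ of a global holomorphic $1$-form can reach the top-pole part built from $\omega$, so $s=0$ is a strong constraint. I would turn it into a $\mathbb C^{*}$-action by invoking Wahl's cohomological characterization of quasi-homogeneous Gorenstein surface singularities — checking that $s=0$ together with the presence of $\omega$ verifies Wahl's criterion — or, directly, by constructing a vector field $\xi_V\in\Gamma(V,\Theta_V)$ with $L_{\xi_V}$ acting semisimply with positive weights on $\Gamma(V,\mathcal O_V)$ (using that on a surface every $2$-form $\mu$ is closed, so that $\mu$ being exact modulo $\Gamma(M,\Omega^2_M)$ is essentially the statement $\mu=d\,i_{\xi}\mu$) and exponentiating. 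Everything downstream is bookkeeping with the resulting action; this verification is the step I expect to be the main obstacle.

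Granting quasi-homogeneity, I would finish both directions at once using the structure theory of quasi-homogeneous normal surface singularities (Pinkham, Orlik--Wagreich, Watanabe). Lift the generating vector field to $\xi_M\in\Gamma(M,\Theta_M)$, holomorphic on $M$ and tangent to $A$ because the resolution is the minimal good one, so $\pi_*\Theta_M=\Theta_V$ (see \cite{Bu-Wa}). Since a $2$-form on a surface is closed, a weight-$l$ form $\mu$ satisfies $l\,\mu=L_{\xi_M}\mu=d(i_{\xi_M}\mu)$; hence every nonzero-weight holomorphic $2$-form on $M$, resp.\ on $M\setminus A$, is $d$ of a holomorphic $1$-form on $M$, resp.\ on $M\setminus A$, so both $H^2_h(M)$ and the $s$-invariant are concentrated in weight zero. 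The structure theory gives a star-shaped minimal good resolution graph with all arm curves rational, $H^1(A,\mathbb C)\cong\mathbb C^{2g_0}$ with $g_0$ the genus of the central curve, and weight-space (Watanabe-type) formulas for $p_g$ and $q$ which reduce $p_g-q$ to an explicit expression in the Seifert data vanishing exactly when $g_0=0$; thus $s=0 \Longleftrightarrow g_0=0 \Longleftrightarrow H^1(A,\mathbb C)=0$, which is (b) and also the content of the converse. Finally, writing weight-zero $2$-forms on $V\setminus\{0\}$ as $g\,\omega$ with $g\in\Gamma(V,\mathcal O_V)$ and comparing with the discrepancy divisor $K_M=\sum_i a_iA_i$ — here $g_0=0$ controls the $a_i$ — one checks $\Gamma(M,\Omega^2_M)$ has no weight-zero part, whence $H^2_h(M)=0$, giving (c); alternatively this, and the equality $\dim H^2_h(M\setminus A)=s$, follow from Theorem~\ref{b to in} together with the mixed Hodge structure on $H^2$ of the link. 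Combining (b), (c) and the displayed identity completes the forward direction, and the converse has been obtained along the way.
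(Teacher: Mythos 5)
First, note that the paper you are reading does not prove this theorem at all: it is quoted from [Lu-Ya2] (Luk--Yau, J. Differential Geometry 77 (2007)) as background, so there is no in-paper argument to compare against and your proposal must stand on its own. Your cohomological bookkeeping is sound and standard: the two short exact sequences from the holomorphic Poincar\'e lemma, Grauert--Riemenschneider vanishing, $H^2(M,\mathscr{O}_M)=0$, and the retraction of $M$ onto $A$ do yield the identifications of $H^1_h(M)$ and $H^2_h(M)$ and the dimension identity you display, and the reduction of the forward direction to your steps (a)--(c) is a reasonable organization. The converse and steps (b), (c) via the Orlik--Wagreich/Pinkham structure theory (star-shaped graph, $H^1(A,\mathbb{C})\cong\mathbb{C}^{2g_0}$, weight decompositions of $p_g$ and $q$, and the observation that $L_{\xi}\mu=d(i_{\xi}\mu)$ kills every nonzero-weight class) are also essentially right, although the weight-zero analysis needed to conclude $p_g-q=0\Leftrightarrow g_0=0$ and $H^2_h(M)=0$ is asserted rather than carried out.

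The genuine gap is your step (a), which you yourself flag as the main obstacle and then do not close. The implication ``$s=0$ implies $(V,0)$ quasi-homogeneous'' is the entire content of the theorem --- it is essentially Wahl's characterization of quasi-homogeneous Gorenstein surface singularities (Compositio Math.\ 55 (1985)), a genuinely hard result --- and neither of your two suggested routes is executed. Invoking ``Wahl's cohomological characterization'' is legitimate only if you state his criterion precisely and verify that $s=0$ implies it; as written this is a placeholder. Your ``direct'' alternative is worse: from the hypothesis that every $\mu\in\Gamma(M\setminus A,\Omega^2_M)$ equals $d\alpha$ modulo $\Gamma(M,\Omega^2_M)$ for some $\alpha\in\Gamma(M\setminus A,\Omega^1_M)$, you cannot conclude that $\alpha$ has the form $i_{\xi}\mu$ for a globally defined holomorphic vector field $\xi$, let alone one whose flow integrates to a $\mathbb{C}^{*}$-action with positive weights; manufacturing such a $\xi$ from purely cohomological data is exactly the hard step, and ``exponentiating'' also requires completeness of the flow and positivity of the weights, neither of which you address. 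As it stands the proposal establishes the easy converse and the downstream consequences of quasi-homogeneity, but not the main implication.
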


\begin{theorem}([Lu-Ya2])\label{th h2=0}
\emph{Let $X$ be a strongly pseudoconvex compact Calabi--Yau $CR$
manifold of dimension $3$. Suppose that $X$ is contained in the
boundary of a strongly pseudoconvex bounded domain $D$ in
$\mathbb{C}^N$. If the holomorphic De Rham cohomology $H^2_h(X)=0$,
then $X$ is a boundary of a complex variety $V$ in $D$ with boundary
regularity and $V$ has only isolated singularities in the interior
and the normalizations of these singularities are Gorenstein surface
singularities with vanishing $s$-invariant.}
\end{theorem}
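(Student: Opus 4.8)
The plan is to realize $X$ as the boundary of the Harvey--Lawson variety, then pass to its normalization together with a resolution, and finally extract the two asserted properties of the interior singularities from two independent inputs: the holomorphic De Rham comparison of Theorem \ref{b to in} will force the $s$-invariants to vanish, while the Calabi--Yau hypothesis will force the singularities to be Gorenstein.

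First I would invoke the Harvey--Lawson theorem quoted in the introduction: $X$ bounds a Stein variety $V\subset D$ with at most isolated singularities, all in the interior, and --- because $X$ is strongly pseudoconvex --- with boundary regularity, so $V$ is a complex submanifold in a one-sided neighbourhood of $X$. Let $\nu\colon\widehat V\to V$ be the normalization, let $\{0_1,\dots,0_m\}$ be the singular points of $\widehat V$, and let $\pi\colon(M,A)\to(\widehat V,\{0_1,\dots,0_m\})$ be a resolution with $A=\pi^{-1}(\{0_1,\dots,0_m\})$. Since $V$ is smooth near $X$, $\nu$ is biholomorphic near $X$, so $M$ is a strongly pseudoconvex complex manifold with $\partial M=X$ which is a modification of the Stein space $\widehat V$; moreover $M\setminus A\cong\widehat V\setminus\{0_1,\dots,0_m\}$ and $\overline{\widehat V}=\widehat V\cup X$ is compact. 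Now I apply part (2) of Theorem \ref{b to in} with $n=2$:
\[
H^2_h(X)\cong H^2_h(M\setminus A),\qquad \dim H^2_h(M\setminus A)=\dim H^2_h(M)+s,\qquad s=s_1+\cdots+s_m,
\]
where $s_i$ is the $s$-invariant of $(\widehat V,0_i)$. Since $H^2_h(X)=0$ by hypothesis, $\dim H^2_h(M)+s=0$; as each $s_i$ is by Definition \ref{s inv} the dimension of a quotient vector space it is $\ge 0$, hence every $s_i=0$.

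It remains to show that each $(\widehat V,0_i)$ is Gorenstein, and here the Calabi--Yau hypothesis enters. Choose a nowhere vanishing $\omega\in\mathscr{S}^2(X)$, i.e., a nowhere vanishing holomorphic section of $\wedge^2\widehat T(X)^*$. Exactly as in the proof of Lemma \ref{boundary}, strong pseudoconvexity of $X=\partial M$ allows $\omega$ to be extended holomorphically to a one-sided neighbourhood of $X$ in $M$, and then, by Andreotti--Grauert (\cite{An-Gr}), to $\widetilde\omega\in\Gamma(M\setminus A,\Omega^2_M)$, which is nowhere vanishing near $X$. Regard $\widetilde\omega$ as a holomorphic $2$-form on $\widehat V\setminus\{0_1,\dots,0_m\}$. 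Being the zero set of a section of the line bundle $\Omega^2$ on the smooth surface $\widehat V\setminus\{0_1,\dots,0_m\}$ and non-vanishing near the boundary, its zero locus is either empty or a pure one-dimensional analytic set whose closure in $\widehat V$ (analytic by Remmert--Stein) misses $X$ and is therefore a compact analytic curve in the Stein space $\widehat V$ --- which is impossible. Hence $\widetilde\omega$ trivializes $\Omega^2$ on $\widehat V\setminus\{0_1,\dots,0_m\}$, and since $\widehat V$ is normal --- so that $\mathscr{O}_{\widehat V}$, and with it $\bar{\bar{\Omega}}^2_{\widehat V}=\Omega^2$, extend across the $0_i$ --- it trivializes the dualizing sheaf $\bar{\bar{\Omega}}^2_{\widehat V}$ near each $0_i$. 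That is, $(\widehat V,0_i)$ is Gorenstein, which together with the previous paragraph proves the theorem.

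I expect the Gorenstein step to be the main obstacle: it rests on the two-stage extension of the CR Calabi--Yau form --- first across the strongly pseudoconvex boundary $X$, then past the one-convex exhaustion of $M\setminus A$ by means of Andreotti--Grauert --- and on the subsequent argument that the extended form remains nowhere vanishing in the interior. A secondary point requiring care throughout is the normalization: the Harvey--Lawson filling $V$ need not be normal a priori, so Theorem \ref{b to in} and the entire discussion of $s$-invariants and of the dualizing sheaf must be run on $\widehat V$; this is legitimate precisely because boundary regularity makes $\nu$ biholomorphic near $X$, so that $\widehat V$ still has $X$ as its boundary.
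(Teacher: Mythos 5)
Your argument is correct, and it is essentially the proof given in [Lu-Ya2]; the present paper only quotes Theorem \ref{th h2=0} without reproving it. The three ingredients you use --- Harvey--Lawson plus boundary regularity ([Ha-La2, Lu-Ya]) to produce $V$, Theorem \ref{b to in}(2) applied to a resolution of the normalization to force $s=\sum s_i=0$ (each $s_i\ge 0$ being a dimension), and the extension of the nowhere-vanishing Calabi--Yau $2$-form across the strongly pseudoconvex boundary and then, via Andreotti--Grauert and Remmert--Stein plus Steinness of $\widehat V$, to a trivialization of the dualizing sheaf near each singular point --- are exactly the ones in the original proof, and your care in running everything on the normalization $\widehat V$ (legitimate since $\nu$ is biholomorphic near $X$) is the right way to handle the fact that $V$ itself need not be normal.
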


\begin{corollary}([Lu-Ya2])\label{h2=0}
\emph{Let $X$ be a strongly pseudoconvex compact $CR$ manifold of
dimension $3$. Suppose that $X$ is contained in the boundary of a
strongly pseudoconvex bounded domain D in $\mathbb{C}^3$. If the
holomorphic De Rham cohomology $H^2_h(X)=0$, then $X$ is a boundary
of a complex variety $V$ in $D$ with boundary regularity and $V$ has
only isolated quasi-homogeneous singularities such that the dual
graphs of the exceptional sets in the resolution are star shaped and
all the curves are rational.}
\end{corollary}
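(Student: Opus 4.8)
The plan is to deduce the statement by feeding the conclusion of Theorem \ref{th h2=0} into Theorem \ref{s=0}; the role of the hypothesis $N=3$ is to make $V$ a hypersurface, so that no normalization is needed and the $s$-invariant statements apply to $V$ itself.

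First I would apply Theorem \ref{th h2=0}. Since $H^2_h(X)=0$, the manifold $X$ bounds a complex variety $V\subset D$ with boundary regularity, $V$ has only isolated singularities $0_1,\dots,0_k$ in its interior, and the normalization of each germ $(V,0_i)$ is a Gorenstein surface singularity with vanishing $s$-invariant. Next I would use that, because $N=3$, the variety $V\subset\mathbb{C}^3$ has pure complex codimension one and hence is locally a hypersurface (a height-one prime in the regular local ring $\mathscr{O}_{\mathbb{C}^3,0_i}$ is principal); an isolated hypersurface surface singularity is normal, being Cohen--Macaulay and regular in codimension one, and Gorenstein. Therefore $V$ is already normal at each $0_i$, so $(V,0_i)$ itself is a Gorenstein surface singularity with $s=0$.

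Now I would apply Theorem \ref{s=0} to each germ $(V,0_i)$: vanishing of the $s$-invariant gives simultaneously that $(V,0_i)$ is quasi-homogeneous and that $H^1(A_i,\mathbb{C})=0$, where $A_i=\pi^{-1}(0_i)=\bigcup_j A_{ij}$ is the exceptional set of a good resolution. Feeding $H^1(A_i,\mathbb{C})=0$ into the Mayer--Vietoris sequence for the compact normal-crossing curve $A_i$ forces $H^1(A_{ij},\mathbb{C})=0$ for every component $A_{ij}$, so each $A_{ij}$ is a smooth rational curve, and forces $H^1$ of the dual graph $\Gamma_i$ to vanish, so $\Gamma_i$ is a tree. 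Finally, since $(V,0_i)$ carries a good $\mathbb{C}^\ast$-action, the structure theorem for two-dimensional $\mathbb{C}^\ast$-singularities (Orlik--Wagreich, Pinkham) says its minimal good resolution has a star-shaped dual graph; combined with the previous sentence this yields that the dual graphs of the exceptional sets are star-shaped and all the exceptional curves are rational.

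The only non-formal ingredients are Theorem \ref{th h2=0} and Theorem \ref{s=0} themselves, together with the classification of resolution graphs of surface singularities with $\mathbb{C}^\ast$-action; the remaining steps are routine bookkeeping with hypersurface normality and with the cohomology of a compact normal-crossing curve. I therefore do not expect a genuine obstacle at this level of argument — the hard analytic content is already packaged inside Theorem \ref{th h2=0}, namely boundary regularity of the Harvey--Lawson solution and the interior control of the $s$-invariant.
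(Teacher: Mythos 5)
The paper states this corollary without proof, quoting it from [Lu-Ya2], so there is no in-paper argument to compare against; your derivation from Theorem \ref{th h2=0} and Theorem \ref{s=0} is correct and is clearly the intended one. The hypersurface observation (isolated singularity $+$ codimension one in $\mathbb{C}^3$ $\Rightarrow$ normal and Gorenstein by Serre's criterion, so normalization changes nothing), the Mayer--Vietoris splitting $H^1(A,\mathbb{C})\cong\bigoplus_j H^1(A_{ij},\mathbb{C})\oplus H^1(\Gamma,\mathbb{C})$, and the Orlik--Wagreich structure theorem for $\mathbb{C}^*$-surface singularities are exactly the right ingredients. The only point worth making explicit is that Theorem \ref{th h2=0} assumes $X$ is Calabi--Yau, which here is automatic by Remark 2 of Section 2 since $X$ has real dimension $3=2n-1$ inside $\mathbb{C}^3=\mathbb{C}^{n+1}$ with $n=2$.
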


So from several theorems above we can see that, in the paper
[Lu-Ya2], Luk and Yau give a sufficient condition  $H^2_h(X)=0$ to
determine when $X$ can bound some special singularities. However,
even if both $H^1_h(X)$ and $H^2_h(X)$ vanish, $V$ still can be
singular.

We use $CR$ invariants given in the last section to get sufficient
and necessary conditions for the variety bounded by $X$ being smooth
after normalization.

\begin{theorem}
\emph{Let $X$ be a strongly pseudoconvex compact Calabi--Yau $CR$
manifold of dimension $3$. Suppose that $X$ is contained in the
boundary of a strongly pseudoconvex bounded domain $D$ in
$\mathbb{C}^N$. Then $X$ is a boundary of the complex variety
$V\subset D-X$ with boundary regularity and the variety is smooth
after normalization if and only if $s$-invariant and $g^{(1,1)}(X)$
vanish.}
\end{theorem}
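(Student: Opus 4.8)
The plan is to combine the structural results already assembled in the excerpt. By Theorem \ref{th h2=0}, the hypothesis $H^2_h(X)=0$ produces a complex variety $V\subset D-X$ with boundary regularity whose singularities $\{0_1,\dots,0_k\}$ are isolated, and whose normalizations $(\widetilde V,\tilde 0_i)$ are Gorenstein surface singularities with vanishing $s$-invariant. Hence half of the work is to pass between $V$ and its normalization $\widetilde V$, and the statement we must prove reduces to: $\widetilde V$ is smooth if and only if $g^{(1,1)}(X)=0$. Since $g^{(1,1)}$ is insensitive to normalization up to the way it is defined (Lemma \ref{g11} expresses $g^{(1,1)}(X)=\sum_i g^{(1,1)}(0_i)$, and the $\bar{\bar\Omega}$-sheaves used to define $g^{(1,1)}(0_i)$ factor through the normalization), it suffices to work one singular germ at a time with $(V,0)$ normal Gorenstein, $s(V,0)=0$.

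The first main step is the ``only if'' direction: if $g^{(1,1)}(X)=0$ then each $(V,0_i)$ is smooth. By Lemma \ref{g11} we have $g^{(1,1)}(0_i)=0$ for every $i$. By Theorem \ref{s=0}, $s_i=0$ together with $(V,0_i)$ Gorenstein forces $(V,0_i)$ to be quasi-homogeneous with $H^1(A_i,\mathbb C)=0$; in particular it admits a $\mathbb C^*$-action, so Theorem \ref{new inv} applies to a minimal good resolution and gives $g^{(1,1)}(0_i)\ge 1$ unless the resolution is trivial, i.e. unless $(V,0_i)$ is already smooth. Thus $g^{(1,1)}(0_i)=0$ is only possible when $0_i$ is a smooth point. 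Running this over all $i$ shows $V$ has no singularities, i.e. $V$ is the complex submanifold claimed (after the normalization already built into Theorem \ref{th h2=0}, which is now an isomorphism).

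The second step is the ``if'' direction, which is essentially immediate: if $X$ bounds a complex submanifold $V\subset D-X$ (so $M=V$ has empty exceptional set $A=\varnothing$), then by Lemma \ref{boundary}, $g^{(1,1)}(X)=g^{(1,1)}(M\setminus A)=g^{(1,1)}(M)$, and
\[
G^{(1,1)}(M)=\Gamma(M,\Omega^2_M)/\langle\Gamma(M,\Omega^1_M)\wedge\Gamma(M,\Omega^1_M)\rangle .
\]
For a Stein manifold of dimension $2$ every holomorphic $2$-form is globally a sum of wedges of holomorphic $1$-forms (locally $f\,dz_1\wedge dz_2=d(fz_1)\wedge dz_2 - z_1\,df\wedge dz_2$, and one patches using a holomorphic partition-of-unity/Cartan $B$ argument, $\Omega^2_M$ being generated by $\Omega^1_M\wedge\Omega^1_M$ as $\mathscr O_M$-modules on a Stein space), so $G^{(1,1)}(M)=0$ and hence $g^{(1,1)}(X)=0$. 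Likewise $s=0$ because there are no singularities (Theorem \ref{b to in}(2) gives $s=\dim H^n_h(M\setminus A)-\dim H^n_h(M)=0$). This direction also records why we must phrase the theorem ``smooth after normalization'': a priori $V$ could be non-normal even when all $g^{(1,1)}$ and $s$ vanish, and normalization is what Theorem \ref{th h2=0} supplies.

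The step I expect to be the main obstacle is making the reduction to the normalization completely rigorous, specifically checking that $g^{(1,1)}(0_i)$ computed on $V$ equals the corresponding invariant on $\widetilde V$, and that $s$ is likewise a normalization-invariant quantity; the cleanest route is to observe that $\bar{\bar\Omega}^p_V=\theta_*\Omega^p_{V\setminus V_{\mathrm{sing}}}$ depends only on the smooth locus, which is unchanged by normalization, so the sheaves $\bar{\bar\Omega}^1$, $\bar{\bar\Omega}^2$, $\bar{\bar\Omega}^{1,1}$ and therefore $\mathscr G^{(1,1)}$ all pull back isomorphically under $\nu\colon\widetilde V\to V$ near the singular points. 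A secondary technical point is Theorem \ref{new inv}'s requirement of a $\mathbb C^*$-action and a minimal good resolution: the existence of the $\mathbb C^*$-action is exactly what Theorem \ref{s=0} delivers from $s=0$ plus Gorenstein, and passing from an arbitrary resolution to the minimal good one does not affect $g^{(1,1)}$ since it is resolution-independent by Lemma \ref{loc inv2}. With these two bookkeeping points in place the argument closes.
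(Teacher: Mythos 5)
Your overall strategy coincides with the paper's: the direction ``smooth after normalization $\Rightarrow$ invariants vanish'' is handled by Lemma \ref{g11} (your direct computation that $G^{(1,1)}(M)=0$ for a smooth Stein surface is just a more explicit version of the same fact), and the converse runs through $s=0$ plus Gorenstein $\Rightarrow$ quasi-homogeneous (Theorem \ref{s=0}) and then Theorem \ref{new inv} applied to a minimal good resolution to force $g^{(1,1)}(0_i)\ge 1$ at any surviving singular point of the normalization, contradicting $g^{(1,1)}(X)=0$. That is exactly the paper's proof.

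The one genuine flaw is your opening move: you invoke Theorem \ref{th h2=0} ``by the hypothesis $H^2_h(X)=0$,'' but the theorem being proved here does not assume $H^2_h(X)=0$; that hypothesis belongs only to the later corollaries (where Theorem \ref{b to in} is used to convert $H^2_h(X)=0$ into $s=0$). As written, your construction of $V$ and of its Gorenstein normalized singularities rests on a theorem whose hypothesis is unavailable. The repair is what the paper actually does: existence of $V\subset D-X$ with boundary regularity and isolated singularities follows from Harvey--Lawson together with [Lu-Ya, Ha-La2], independently of any De Rham vanishing; the Gorenstein property of the normalized singularities comes from the Calabi--Yau hypothesis on $X$ (a nowhere vanishing holomorphic $2$-form on the smooth locus); and $s=0$ is simply the assumed hypothesis of that implication. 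Once these three facts are sourced correctly, the rest of your argument --- including the observations that $\bar{\bar\Omega}^{\bullet}$, and hence $g^{(1,1)}$, only sees the smooth locus and so passes through normalization, and that the $\mathbb{C}^*$-action needed for Theorem \ref{new inv} is supplied by Theorem \ref{s=0} --- closes as you describe.
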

\begin{proof}
$(\Rightarrow)$ : Since $V$ is smooth after normalization,
$g^{(1,1)}(X)=0$ follows from Lemma \ref{g11}.

$(\Leftarrow)$ : It is well known that $X$ is a boundary of a
variety $V$ in $D$ with boundary regularity ([Lu-Ya, Ha-La2]). Since
$s=0$, $X$ is a boundary of the complex sub-manifold $V\subset D-X$
with only isolated Gorenstein quasi-homogeneous singularities
$\{0_1,\cdots, 0_k \}$ after normalization. Let $\pi_i:
M_i\rightarrow V_i$ be the minimal good resolution of a sufficiently
small neighborhood $V_i$ of $0_i$ in $V$, $1\le i\le k$. From
Theorem \ref{new inv}, $dim G^{(1,1)}(M_i)>0$, which contradicts
$g^{(1,1)}(X)=0$. So $V$ is smooth.
\end{proof}

\begin{corollary}
\emph{Let $X$ be a strongly pseudoconvex compact $CR$ manifold of
dimension $3$. Suppose that $X$ is contained in the boundary of a
strongly pseudoconvex bounded domain $D$ in $\mathbb{C}^3$. Then $X$
is a boundary of the complex sub-manifold $V\subset D-X$ if and only
if $s$-invariant and $g^{(1,1)}(X)$ vanish.}
\end{corollary}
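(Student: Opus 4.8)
The plan is to obtain this corollary from the preceding theorem by checking that the two features distinguishing the two statements---the Calabi--Yau hypothesis and the passage to the normalization---become automatic when $N=3$.

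First I would invoke Remark 2 following the definition of a Calabi--Yau $CR$ manifold: since $X$ lies in the boundary of a bounded strongly pseudoconvex domain $D\subset\mathbb{C}^{3}=\mathbb{C}^{n+1}$ with $n=2$, $X$ is automatically a strongly pseudoconvex compact Calabi--Yau $CR$ manifold of dimension $3$. Hence the preceding theorem applies verbatim to $X$: the variety $V\subset D-X$ furnished by Harvey--Lawson has boundary regularity and is smooth after normalization if and only if $s(X)=0$ and $g^{(1,1)}(X)=0$.

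Next I would argue that for $N=3$ ``smooth after normalization'' coincides with ``smooth''. By the Harvey--Lawson theorem $V$ is a $2$-dimensional Stein subvariety of $\mathbb{C}^{3}$ with at most isolated singularities. Being an analytic subset of codimension one in the regular (indeed factorial) space $\mathbb{C}^{3}$, $V$ is locally a hypersurface $\{f=0\}$, hence Cohen--Macaulay; since $V_{\mathrm{sing}}$ is a finite set, $V$ is regular in codimension one, so Serre's criterion $(R_{1})+(S_{2})$ forces $V$ to be normal. Consequently the normalization map of $V$ is an isomorphism, and the conclusion of the theorem reads: $V\subset D-X$ is itself a complex submanifold if and only if $s(X)=0$ and $g^{(1,1)}(X)=0$. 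In particular, for the forward implication a smooth $V$ is trivially smooth after normalization, so the theorem gives $s(X)=g^{(1,1)}(X)=0$; for the converse, $s(X)=g^{(1,1)}(X)=0$ gives smoothness after normalization, which by normality of $V$ means $V$ is already a complex submanifold. This is exactly the assertion of the corollary.

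I do not expect a genuine obstacle here; the only point requiring care is the normality claim---that a $2$-dimensional subvariety of $\mathbb{C}^{3}$ with isolated singularities is automatically normal, so that no nontrivial normalization intervenes. This is consistent with Corollary \ref{h2=0}, where the variety bounded by $X$ in $\mathbb{C}^{3}$ is already treated as a normal hypersurface with quasi-homogeneous singularities.
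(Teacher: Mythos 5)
Your proposal is correct and is exactly the argument the paper leaves implicit: Remark 2 makes the Calabi--Yau hypothesis automatic for $X\subset\partial D\subset\mathbb{C}^{3}$, and the Harvey--Lawson variety, being a codimension-one (hence Cohen--Macaulay) subvariety of $\mathbb{C}^{3}$ with isolated singularities, is normal by Serre's criterion, so ``smooth after normalization'' reduces to ``smooth''. Nothing further is needed.
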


From Theorem \ref{b to in}, we know that if $H_h^2{(X)}=0$, and then
$s=0$. So we can get a necessary and sufficient condition in terms
of boundary $X$, with $H_h^2{(X)}=0$, to determine when $X$ is a
boundary of a manifold up to normalization.

\begin{corollary}
\emph{Let $X$ be a strongly pseudoconvex compact Calabi--Yau $CR$
manifold of dimension $3$. Suppose that $X$ is contained in the
boundary of a strongly pseudoconvex bounded domain $D$ in
$\mathbb{C}^N$ with $H_h^2{(X)}=0$. Then $X$ is a boundary of the
complex sub-manifold up to normalization $V\subset D-X$ with
boundary regularity if and only if $g^{(1,1)}(X)=0$.}
\end{corollary}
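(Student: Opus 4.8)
The plan is to obtain this Corollary as an immediate formal consequence of the Theorem proved above — the one characterizing, under the Calabi--Yau and strong pseudoconvexity hypotheses, when $X$ bounds a complex variety $V \subset D - X$ with boundary regularity that is smooth after normalization, in terms of the simultaneous vanishing of $s(X)$ and $g^{(1,1)}(X)$ — together with the observation that the extra hypothesis $H^2_h(X) = 0$ already forces $s(X) = 0$. Once this is noted, the two-fold vanishing condition of that Theorem collapses to the single condition $g^{(1,1)}(X) = 0$, which is exactly what the Corollary asserts.

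Concretely, the first step is to record the implication $H^2_h(X) = 0 \Rightarrow s(X) = 0$. By the Harvey--Lawson theorem, $X$ bounds a Stein variety $V \subseteq \mathbb{C}^N$ with at most isolated singularities; choosing a resolution $\pi: (M, A) \rightarrow (V, 0_1, \dots, 0_k)$ we get $X = \partial M$ with $M$ a strongly pseudoconvex modification of $V$, so the second part of Theorem \ref{b to in} applies with $n = 2$ and gives $\dim H^2_h(X) = \dim H^2_h(M \backslash A) = \dim H^2_h(M) + s$, with $s = s_1 + \cdots + s_m \geq 0$; since $\dim H^2_h(X) = 0$ and both summands on the right are non-negative, $s = 0$. (Alternatively one may simply quote Theorem \ref{th h2=0}, whose conclusion already contains the boundary regularity of $V$, the isolatedness of its singularities, and the Gorenstein property and vanishing $s$-invariant of their normalizations — precisely the data consumed in the proof of the Theorem above.) The second step is then to apply that Theorem directly: $X$ bounds a complex variety $V \subset D - X$ with boundary regularity which is smooth after normalization if and only if $s(X) = 0$ and $g^{(1,1)}(X) = 0$. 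Under the standing hypothesis $H^2_h(X) = 0$ the condition $s(X) = 0$ is automatic by the first step, so this equivalence reduces to: $X$ bounds such a $V$ if and only if $g^{(1,1)}(X) = 0$, which is both implications of the Corollary.

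I do not expect a real obstacle, since all the substantive content has already been established — in Theorem \ref{new inv} (through the Theorem above) and in the Luk--Yau results recalled as Theorems \ref{b to in} and \ref{th h2=0}. The only matters requiring care are bookkeeping: identifying the phrase ``complex sub-manifold up to normalization with boundary regularity'' in the Corollary with ``complex variety with boundary regularity that is smooth after normalization'' in the Theorem above, and checking that the hypotheses of Theorem \ref{b to in} — that $M$ is a modification of a Stein space with only isolated singularities and $X = \partial M$ — are exactly what the Harvey--Lawson theorem furnishes.
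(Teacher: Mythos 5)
Your proposal is correct and follows essentially the same route as the paper: the authors likewise note that Theorem \ref{b to in} (part 2, with $n=2$) forces $s=0$ whenever $H^2_h(X)=0$, and then invoke the preceding theorem characterizing smoothness after normalization by the vanishing of $s$ and $g^{(1,1)}(X)$. Your extra remarks (the non-negativity argument for $\dim H^2_h(M)$ and $s$, and the alternative appeal to Theorem \ref{th h2=0}) only make explicit what the paper leaves implicit.
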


\begin{corollary}
\emph{Let $X$ be a strongly pseudoconvex compact $CR$ manifold of
dimension $3$. Suppose that $X$ is contained in the boundary of a
strongly pseudoconvex bounded domain $D$ in $\mathbb{C}^3$ with
$H_h^2{(X)}=0$. Then $X$ is a boundary of the complex sub-manifold
$V\subset D-X$ if and only if $g^{(1,1)}(X)=0$ .}
\end{corollary}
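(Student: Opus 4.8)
The plan is to read off this statement from the $\mathbb{C}^{3}$ characterization obtained just above --- that for a strongly pseudoconvex compact $CR$ three-manifold $X$ in the boundary of a strongly pseudoconvex bounded domain $D\subset\mathbb{C}^{3}$, the manifold $X$ bounds the complex submanifold $V\subset D-X$ if and only if $s(X)=0$ and $g^{(1,1)}(X)=0$. Thus the only new point to verify is that the extra hypothesis $H^{2}_{h}(X)=0$ makes the condition $s(X)=0$ automatic; once that is done, the paired condition collapses to $g^{(1,1)}(X)=0$ alone and the equivalence follows.

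To check that $H^{2}_{h}(X)=0$ forces $s(X)=0$, I would argue as follows. By Harvey--Lawson (and [Lu-Ya2]) $X$ bounds a Stein variety $V\subset\mathbb{C}^{3}$ with at most isolated singularities $\{0_{1},\dots,0_{m}\}$, and after a modification $X=\partial M$ with $M$ a strongly pseudoconvex manifold modifying $V$ and $A\subset M$ the maximal compact analytic set blowing down to the singular set. Applying Theorem \ref{b to in}(2) then gives $H^{2}_{h}(X)\cong H^{2}_{h}(M\backslash A)$ together with the numerical identity $\dim H^{2}_{h}(M\backslash A)=\dim H^{2}_{h}(M)+s$, where $s=s_{1}+\dots+s_{m}\ge 0$ is the total $s$-invariant. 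Since $\dim H^{2}_{h}(M)\ge 0$ and $s\ge 0$, a vanishing sum forces each summand to vanish, so $s(X)=0$. The same conclusion can be reached through Corollary \ref{h2=0}, which over $\mathbb{C}^{3}$ already exhibits the singularities of $V$ as isolated quasi-homogeneous ones with star-shaped, all-rational dual graphs --- hence in particular Gorenstein --- and this is the consistency check I would run in parallel.

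Feeding $s(X)=0$ back into the $\mathbb{C}^{3}$ characterization above yields exactly the claim. If one wants the two implications spelled out directly: for the forward direction, a smooth $V$ has no singular points, so $g^{(1,1)}(X)=\sum_{i}g^{(1,1)}(0_{i})=0$ by Lemma \ref{g11}; for the converse, were $V$ to carry a singular point $0_{i}$ --- necessarily a $2$-dimensional normal singularity, since in $\mathbb{C}^{3}$ the variety $V$ is a hypersurface and an isolated hypersurface surface singularity satisfies $R_{1}$ and $S_{2}$, and quasi-homogeneous by Corollary \ref{h2=0}, hence carrying a $\mathbb{C}^{*}$-action --- Theorem \ref{new inv} applied to its minimal good resolution would give $g^{(1,1)}(0_{i})\ge 1$, whence $g^{(1,1)}(X)\ge 1$ by Lemma \ref{g11}, contradicting the hypothesis; so $V$ is smooth. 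I do not expect a genuine obstacle in assembling this corollary, since all the heavy analysis --- the Harvey--Lawson solution with boundary regularity, the structure of $s=0$ singularities, and above all the lower bound $g^{(1,1)}\ge 1$ of Theorem \ref{new inv} --- is already available; the only subtleties worth stating explicitly are that $\dim H^{2}_{h}(M)$ and $s$ are nonnegative integers, so that their sum cannot vanish without each doing so, and that over $\mathbb{C}^{3}$ normalization is vacuous, so that ``smooth after normalization'' genuinely reduces to ``smooth''.
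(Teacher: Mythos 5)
Your proposal is correct and follows essentially the same route as the paper: the corollary is read off from the preceding $\mathbb{C}^3$ characterization (boundedness of a submanifold iff $s=0$ and $g^{(1,1)}(X)=0$) together with the observation, via Theorem \ref{b to in}, that $H^2_h(X)=0$ forces $s=0$ because $\dim H^2_h(M\backslash A)=\dim H^2_h(M)+s$ is a sum of nonnegative terms. Your explicit remark that an isolated hypersurface surface singularity in $\mathbb{C}^3$ is normal (so normalization is vacuous and Theorem \ref{new inv} applies directly) is a helpful clarification the paper leaves implicit, but not a different argument.
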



\end{document}